\numberwithin{equation}{section}
\newtheorem{theorem}{Theorem}[section]
\newtheorem{lemma}[theorem]{Lemma}
\newtheorem{corollary}[theorem]{Corollary}
\newtheorem{proposition}[theorem]{Proposition}
\theoremstyle{definition}
\newtheorem{definition}[theorem]{Definition}
\newtheorem{remark}[theorem]{Remark}
\newtheorem{conjecture}{Conjecture}
\newtheorem{question}{Question}
\setlist[enumerate,1]{label={(\roman*)}}
\renewcommand{\epsilon}{\varepsilon}
\newcommand{\C}{\mathbb{C}}
\newcommand{\E}{\mathbb{E}}
\newcommand{\FF}{\mathbb{F}}
\newcommand{\M}{\mathcal{M}}
\newcommand{\N}{\mathbb{N}}
\newcommand{\R}{\mathbb{R}}
\newcommand{\Z}{\mathbb{Z}}
\newcommand{\fx}{\mathbf{x}}
\newcommand{\id}{\operatorname{id}}
\newcommand{\tr}{\operatorname{tr}}
\newcommand{\Tr}{\operatorname{Tr}}
\newcommand{\rank}{\operatorname{rank}}
\renewcommand{\1}{\mathbf{1}}
\newcommand{\NC}{\operatorname{NC}}
\newcommand{\capa}{\mathrm{cap}}
\newcommand{\sym}{{\mathrm{h}}}
\newcommand{\ds}{\mathrm{ds}}
\def\moverlay{\mathpalette\mov@rlay}
\def\mov@rlay#1#2{\leavevmode\vtop{%
\baselineskip\z@skip \lineskiplimit-\maxdimen
\ialign{\hfil$#1##$\hfil\cr#2\crcr}}}
\def\plangle{\moverlay{(\cr<}}
\def\prangle{\moverlay{)\cr>}}
\title[FK-determinants of matrix-valued semicircular elements]{Fuglede-Kadison determinants of matrix-valued semicircular elements and capacity estimates}
\author[T. Mai]{Tobias Mai}
\address{Saarland University, Department of Mathematics, D-66123 Saarbr\"ucken, Germany}
\email{mai@math.uni-sb.de}
\author[R. Speicher]{Roland Speicher}
\email{speicher@math.uni-sb.de}
\date{\today}
\begin{document}

\begin{abstract}
We calculate the Fuglede-Kadison determinant of arbitrary matrix-valued semicircular operators in terms of the capacity of the corresponding covariance mapping. We also improve a lower bound by Garg, Gurvits, Oliveira, and Widgerson on this capacity, by making it dimension-independent.
\end{abstract}

\maketitle

\section{Introduction}

Consider a matrix
$A=a_1\otimes \fx_1+\ldots+a_n\otimes \fx_n$, 
where $\fx_1,\dots,\fx_n$ are noncommuting variables and $a_1,\dots,a_n\in M_m(\C)$ are arbitrary matrices of the same size $m$.
The noncommutative Edmonds' problem asks whether $A$ is invertible over the noncommutative rational functions in $\fx_1,\dots,\fx_n$. This problem has become quite prominent in recent years and there are now a couple of deterministic algorithms to decide about this \cite{GGOW16, GGOW19, ivanyos2018constructive, hamada2020computing,chatterjee2023noncommutative}. In \cite{MSY2023,hoffmann2023computing} we provided a non-commutative probabilistic approach to this question by showing that this algebraic problem is equivalent to an analytic one, where we replace the formal variables $\fx_i$ by concrete operators on infinite-dimensional Hilbert spaces.

A particular nice choice for the analytic operators are freely independent semicircular variables $s_1,\dots,s_n$, which are the non-commutative analogues of independent Gaussian random variables. In the case where all the $a_i$ are selfadjoint, the matrix 
$S=a_1\otimes s_1+\ldots+a_n\otimes s_n$ is an operator-valued semicircular element, whose classical distribution $\mu_S$ can be described via an equation for its Cauchy transform. In order to check invertibility of $S$ (as an unbounded operator) one has to decide whether $\mu_S$ has an atom at zero.  Since the equation for the Cauchy transform cannot be solved explicitly we have to rely on numerical approximations and then the question becomes relevant how we can distinguish an atom at zero from a density with high accumulation of mass about zero. What we need is a quantity which specifies this information and which can be calculated or at least controlled without having access to full knowledge of the distribution.

It should be evident that for ordinary, finite-dimensional, matrices such a quantity is given by the determinant. We will show in this note that this insight can be extended to our infinite-dimensional setting.

Let us first have a look why the determinant is for a matrix $B\in M_d(\C)$ a good tool to decide upon how many eigenvalues we might have close to zero. Let us restrict to positive matrices, so that the eigenvalues are also positive. It is then clear that if we have a lower bound on the determinant and an upper bound on the eigenvalues then we can also 
obtain an upper estimate for the number of eigenvalues in any given interval about zero. Upper bounds on the eigenvalues are usually easy to achieve via bounding the operator norm, so the main question is on lower bounds for the determinant. 

By scaling we can, of course, move as many eigenvalues as we like close to zero, so we need some normalization if we hope for a uniform control of the accumulation close to zero. Hence, we only consider matrices where all the entries are integers. The question is whether we also have a uniform lower bound on their determinant. Indeed we do. The determinant of such an integer matrix must necessarily be an integer. If we consider only invertible positive matrices then this integer must be at least 1. 

Putting these observations together indicates that no invertible matrix with integer entries can have too many eigenvalues close to zero, and the precise estimate depends on the matrix only through its operator norm.

Our main message is now that these considerations survive in a nice form also in the infinite dimensional setting. Whereas the determinant itself cannot be generalized to infinite dimensions, the following variant
$$\Delta(B):=\det(BB^*)^{\frac 1{2d}}\qquad \text{for $B\in M_d(\C)$}$$
has such an extension.
This quantity was introduced by Fuglede and Kadison in \cite{fuglede1952determinant}, and is usually called the Fuglede-Kadison determinant. It has all the nice properties one expects of a determinant and has become quite prominent in a variety of contexts, see, for example, \cite{deninger2009mahler, haagerup2000brown, hayes2016fuglede, luck2002l2} 

We can apply this determinant to our operator-valued semicircular operators $S=a_1\otimes s_1+\ldots+a_n\otimes s_n$ and see what we get from there. We will show that the invertibility of $S$ means, as in the finite-dimensional case, that $\Delta(S)\not=0$. Having a lower bound on $\Delta(S)$ gives an upper estimate on how much mass the distribution of $S$ can have in any given interval about zero. So the main question is whether the reasoning from above for lower bounds survives somehow in this setting. For a normalization we will now require that all matrices $a_i\in M_m(\C)$ have only integer entries. Since in the non-commutative setting there is now no explicit formula for the calculation of $\Delta(S)$ in terms of entries of $S$, there is no apriori reason that $\Delta(S)$ should be an integer, or of any other special form. However, we will show that indeed there is such a reason.

Let us first remark that there are other situations where the finite-dimensional arguments have been shown to work. Namely, if we replace the $\fx_i$ not by free semicirculars, but by free Haar unitaries instead, then
everything still works. Free Haar unitaries are the generators of the free group in their left regular representation and they can be approximated in distribution by permutation matrices. Since the entries of those permutation matrices are integers, namely $0$ or $1$, we have the estimates as before for the permutation matrices and, by upper semi-continuity of the Fuglede-Kadison determinant, this goes also over to the limiting Haar unitaries. This kind of argument works actually not only for free groups, but for all sofic groups, which can be approximated by permutation matrices in some sense. In this group context such questions have become quite prominent in the last decades. In particular, the use of the Fuglede-Kadison determinant in this context was promoted by L\"uck \cite{luck2002l2}; he also formulated the lower bound on the determinant as the ``determinantal conjecture''. There was much work on this, see, for example, \cite{schick2001}, before it was proved finally for all sofic groups by Elek and Szabo in \cite{elek2005hyperlinearity}. Those investigations are related with questions about Atiyah properties, Betti numbers, or Mahler measures.

Though this looks very close to what we want, there is no way of putting our setting of an operator-valued semicircular operator in the group context and we also see no direct way of changing the free Haar unitaries into free semicirculars in a way that allows control of the determinant. However, as we will show, we can make this transition from Haar unitaries to semicirculars in a more indirect way. This relies essentially on the notion of ``capacity'' of completely positive maps, which was introduced by Gurvits \cite{Gur2004} and crucially used by Garg, Gurvits, Oliveira, and Widgerson in the analysis of their algorithm in \cite{GGOW19}. In a sense we transfer their ideas to our setting, but along the way we also improve one of their main estimates on the capacity.

Let us make a remark on nomenclature. In the following our main concern are operators of the form $S=\sum_{i=1}^n a_i\otimes s_i$, where $s_1,\dots,s_n$ are free semicircular variables. If all the $a_i$ are selfadjoint then $S$ is also selfadjoint, and is a matrix-valued semicircular operator, in the sense of operator-valued free probability. If the $a_i$ are not necessarily selfadjoint, then $S$ is a non-selfajoint version of a matrix-valued semicircular element. For simplicity of language, we will address the general situation in the following just as matrix-valued semicircular element; should we insist on $S$ being selfajoint, then we will say so.

Let us now present our main statements. We begin with the notion of capacity.

\begin{definition}
For a (not necessarily completely) positive map
$\eta:M_m(\C)\to M_m(\C)$
we define its \emph{capacity} by
$$\capa(\eta):=\inf\{\det(\eta(b))\mid b\in M_m(\C), b>0, \det(b)=1\}.$$
\end{definition}

\begin{theorem}\label{thm:1.2}
Consider a matrix-valued semicircular element of the form
$S=\sum_{i=1}^n a_i\otimes s_i$ with $a_i\in M_m(\C)$,
and let $\eta$ be the corresponding covariance map, that is, the completely positive map
$$\eta:M_m(\C)\to M_m(\C), \quad b\mapsto \eta(b):=\sum_{i=1}^n a_iba^*_i.$$
Then the Fuglede-Kadison determinant of $S$
is calculated as follows:
$$\Delta(S)=\capa(\eta)^{\frac 1{2m}}e^{-\frac{1}{2}}.$$
\end{theorem}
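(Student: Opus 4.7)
My plan is to proceed by a two-step reduction: first I would show that the claimed identity is invariant under the natural rescaling $S\mapsto (P\otimes 1)S(Q\otimes 1)$, and then verify it in the canonical \emph{doubly stochastic} case, into which any $\eta$ with $\capa(\eta)>0$ is brought by operator scaling.

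For the rescaling step, given $P,Q\in GL_m(\C)$, set $\tilde S := \sum_i (Pa_iQ)\otimes s_i$, whose covariance is $\tilde\eta(c) = P\eta(QcQ^*)P^*$. Using $\Delta(P\otimes 1) = |\det P|^{1/m}$ together with multiplicativity of the Fuglede--Kadison determinant in a finite von Neumann algebra yields $\Delta(\tilde S) = |\det(PQ)|^{1/m}\Delta(S)$, and a change of variables in the capacity infimum (via the homogeneity $\det\eta(\lambda b) = \lambda^m\det\eta(b)$ and the bijection $c\mapsto QcQ^*/|\det Q|^{2/m}$ of the set $\{c>0:\det c=1\}$) yields $\capa(\tilde\eta) = |\det(PQ)|^2\capa(\eta)$. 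These two scalings combine exactly so that the formula $\Delta(S) = \capa(\eta)^{1/(2m)}e^{-1/2}$ is invariant under $S\mapsto\tilde S$.

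Next, assume $\eta$ is doubly stochastic, i.e., $\eta(1_m)=1_m$ and $\eta^*(1_m)=1_m$. Convexity of $X\mapsto\log\det\eta(e^X)$ on selfadjoint $X$ (a classical consequence of log-concavity of $\det$ for positive maps) turns the minimization defining $\capa(\eta)$ into a convex program in $X=\log b$ with linear constraint $\tr X = 0$; the KKT condition at $X=0$ reduces to $\eta^*(\eta(1)^{-1}) = 1$, which is the hypothesis, so $b=1$ is the global minimizer and $\capa(\eta) = \det\eta(1) = 1$. For the Fuglede--Kadison determinant, I would pass to the selfadjoint dilation $\hat S = \bigl(\begin{smallmatrix} 0 & S \\ S^* & 0\end{smallmatrix}\bigr) = \sum_i\hat a_i\otimes s_i$ with $\hat a_i = \bigl(\begin{smallmatrix} 0 & a_i \\ a_i^* & 0\end{smallmatrix}\bigr)$. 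A block computation shows $\Delta(\hat S)=\Delta(S)$ and that the covariance $\hat\eta$ of $\hat S$ satisfies $\hat\eta(1_{2m}) = \bigl(\begin{smallmatrix}\eta(1)&0\\0&\eta^*(1)\end{smallmatrix}\bigr) = 1_{2m}$. Thus the matrix-valued Cauchy transform $\hat G(z) = (\id\otimes\tau)[(z-\hat S)^{-1}]$, determined by the subordination equation $\hat G(z) = (z-\hat\eta(\hat G(z)))^{-1}$, admits the scalar ansatz $\hat G(z) = g(z)\cdot 1_{2m}$, which reduces to the standard scalar semicircular equation $g(z) = 1/(z-g(z))$. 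Hence $\hat S$ has standard semicircular distribution with respect to $\tr_{2m}\otimes\tau$, and $\Delta(S) = \Delta(\hat S) = e^{-1/2}$.

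For arbitrary $\eta$ with $\capa(\eta)>0$, I would invoke operator scaling, as developed by Gurvits and refined by Garg--Gurvits--Oliveira--Wigderson~\cite{GGOW19}, to produce a sequence $(P_n,Q_n)\in GL_m(\C)^2$ such that $\tilde\eta_n = P_n\eta(Q_n\cdot Q_n^*)P_n^*$ approaches a doubly stochastic map. The $*$-distribution of $\tilde S_n$, being polynomial in the coefficients, then converges to that of a standard semicircular; combined with semi-continuity of $\Delta$ and the base case, this gives $\Delta(\tilde S_n)\to e^{-1/2}$, while the KKT argument from the previous paragraph gives $\capa(\tilde\eta_n)\to 1$. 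Substituting into the two scaling identities forces $|\det(P_nQ_n)|^{2/m}\to\capa(\eta)^{-1/m}$, yielding $\Delta(S) = \capa(\eta)^{1/(2m)}e^{-1/2}$. The degenerate case $\capa(\eta)=0$ is handled by perturbing with $\eta_\epsilon = \eta + \epsilon\cdot\id$ and invoking semi-continuity. The main technical hurdle is this last limiting argument: exact doubly-stochastic scaling need not exist (it does only for ``indecomposable'' $\eta$), so one must simultaneously control the convergence of the distributions, of the Fuglede--Kadison determinants, and of the capacities, and verify that the prefactors $|\det(P_nQ_n)|^{1/m}$ converge precisely to $\capa(\eta)^{-1/(2m)}$.
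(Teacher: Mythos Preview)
Your scaling invariance and the doubly-stochastic base case are correct and match the paper's approach (Lemma~3.6 and Propositions~3.2, 3.5 there). The genuine gap is in the passage to general $\eta$.

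The Fuglede--Kadison determinant is only \emph{upper} semi-continuous under convergence in distribution: if $T_n\to T$ then $\Delta(T)\ge\limsup\Delta(T_n)$. Consequently, when your $\widetilde S_n$ converge in distribution to a standard semicircular $s$, you obtain only $\limsup_n\Delta(\widetilde S_n)\le\Delta(s)=e^{-1/2}$, not the full convergence you claim. Combined with $\capa(\widetilde\eta_n)\to 1$ (which does follow, since with the canonical choice $P_n=\eta(c_n)^{-1/2}$, $Q_n=c_n^{1/2}$ one has $\widetilde\eta_n(\1_m)=\1_m$ and hence $e^{-\delta_n}\le\capa(\widetilde\eta_n)\le\det\widetilde\eta_n(\1_m)=1$), your two scaling identities yield only the \emph{upper} bound
\[
\Delta(S)=\lim_n\frac{\Delta(\widetilde S_n)}{|\det(P_nQ_n)|^{1/m}}\le \capa(\eta)^{1/(2m)}e^{-1/2}.
\]
This is exactly the content of the paper's Section~3.4. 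The reverse inequality does not follow from the same mechanism.

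For the lower bound the paper runs semi-continuity in the opposite direction. One perturbs $S$ itself (not its scaling) to $S_t$ with covariance $\eta_t=\eta+t\,\eta^\flat$, where $\eta^\flat(b)=\tr_m(b)\1_m$ is the completely depolarizing channel. For every $t>0$ this $\eta_t$ is strictly positive, hence indecomposable, so an exact doubly-stochastic scaling exists and one has the \emph{exact} identity $\Delta(S_t)=\capa(\eta_t)^{1/(2m)}e^{-1/2}$. Now $S_t\to S$ in distribution, and upper semi-continuity gives $\Delta(S)\ge\limsup_{t\searrow0}\Delta(S_t)\ge\capa(\eta)^{1/(2m)}e^{-1/2}$, using the monotonicity $\capa(\eta_t)\ge\capa(\eta)$. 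Note that your proposed perturbation $\eta+\epsilon\cdot\id$ would not suffice here: the identity map is not strictly positive, so $\eta+\epsilon\cdot\id$ need not be indecomposable.

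Finally, your treatment of $\capa(\eta)=0$ has the same one-sidedness problem: semi-continuity applied to $S_\epsilon\to S$ gives $\Delta(S)\ge\limsup\Delta(S_\epsilon)$, which cannot force $\Delta(S)=0$. The paper instead invokes the equivalence (via \cite{Gur2004} and \cite{MSY2023}) between $\capa(\eta)=0$, $\eta$ being rank-decreasing, and $S$ having nontrivial kernel, the last of which gives $\Delta(S)=0$ directly.
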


If $S$ is not selfadjoint, we can also consider the dual covariance map
$$\eta^*:\ M_m(\C)\to M_m(\C), \quad b\mapsto \eta^*(b):=\sum_{i=1}^n a^*_iba_i.$$
Since $\Delta(S^*)=\Delta(S)$ and 
$\capa(\eta^*)=\capa(\eta)$, this does not contain any new information about the Fuglede-Kadison determinant of $S$.

An immediate consequence of \Cref{thm:1.3} is that $\capa(\eta)>0$ implies $\int_0^\infty \log t\, d\mu_{\vert S\vert}(t) > -\infty$. We point out that this follows also from \cite{SS2015}; however, \Cref{thm:1.3} goes far beyond this qualitative statement as it provides an explicit formula relating $\Delta(S)$ and $\capa(\eta)$ and so a quantification of the result of \cite{SS2015} in the case of matrix-valued semicircular elements. The approach of \cite{SS2015} involves the so-called \emph{Novikov-Shubin invariant} of $S$, for which one does not have sufficient control; it was in fact this problem which also occurred in \cite{hoffmann2023computing} and which has motivated the present paper.

If we want to use the formula from \Cref{thm:1.2} to obtain a lower bound on $\Delta(S)$ then we need such lower bounds on capacities. Such an estimate, with lower bound $1/(m^{2m})$, was given in Theorem 2.18 of \cite{GGOW19}. We will actually show that this can be improved to a bound that is independent of the size of the considered matrices.

\begin{theorem}\label{thm:1.3}
Let $\eta$ be a completely positive map with integer coefficients, that is,
$$\eta:\ M_m(\C)\to M_m(\C), \quad b\mapsto \eta(b):=\sum_{i=1}^n a_iba^*_i,\qquad
\text{with $a_i\in M_m(\Z)$}.$$ 
If $\eta$ is rank non-decreasing then
$\capa(\eta)\geq 1$.
\end{theorem}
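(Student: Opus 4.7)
My plan is to route through the Fuglede-Kadison determinant of an auxiliary operator built over the free group $F_n$, and to exploit the integer hypothesis via L\"uck's determinantal approximation (equivalently, Elek-Szabo's resolution of the determinantal conjecture for sofic groups). Let $u_1,\dots,u_n$ be the canonical generators of $F_n$ in the left regular representation on $\ell^2(F_n)$---the free Haar unitaries---and set
\[T := \sum_{i=1}^{n} a_i \otimes u_i \;\in\; M_m(\C) \otimes \cL F_n.\]
I will establish two independent inequalities whose combination yields the theorem: $\capa(\eta) \geq \Delta(T)^{2m}$ and $\Delta(T) \geq 1$.

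\noindent\textbf{First inequality (Jensen for $\Delta$).} Use the canonical conditional expectation $E := \id_{M_m} \otimes \tau : M_m(\C) \otimes \cL F_n \to M_m(\C)$. A direct computation based on $\tau(u_i u_j^*) = \delta_{ij}$ shows that for any $b>0$ with $\det(b)=1$,
\[E\bigl(T(b^{-1}\otimes 1)T^*\bigr) \;=\; \sum_{i=1}^{n} a_i b^{-1} a_i^* \;=\; \eta(b^{-1}).\]
Operator Jensen for the concave function $\log$ gives $\Delta(X) \leq \det(E(X))^{1/m}$ for positive $X$, while multiplicativity of the Fuglede-Kadison determinant yields $\Delta\bigl(T(b^{-1}\otimes 1)T^*\bigr) = \Delta(T)^{2} \det(b)^{-1/m}$. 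Rearranging and substituting $c := b^{-1}$ (which also has $\det(c)=1$) gives $\Delta(T)^{2m} \leq \det(\eta(c))$; taking the infimum over such $c$ produces the first inequality.

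\noindent\textbf{Second inequality (integer approximation).} Here the integer hypothesis and residual finiteness of $F_n$ enter. Fix a decreasing chain of finite-index normal subgroups $N_k \lhd F_n$ with $\bigcap_k N_k = \{e\}$, and let $\pi_k : \cL F_n \to M_{|G_k|}(\C)$ be the map induced by the regular representation of $G_k := F_n/N_k$. The matrices
\[T_k := (\id_{M_m} \otimes \pi_k)(T) \;\in\; M_{m|G_k|}(\Z)\]
have integer entries. The rank non-decreasing hypothesis on $\eta$ translates (via the Cohn-style theory of noncommutative rank and fullness of the linear pencil $\sum a_i \otimes \fx_i$) into invertibility of $T_k$ for all sufficiently large $k$; invertibility combined with integrality forces $|\det T_k| \geq 1$, and hence $\Delta(T_k) = |\det T_k|^{1/(m|G_k|)} \geq 1$. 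L\"uck's approximation theorem (upper semi-continuity of $\Delta$ along the chain $\{N_k\}$) then yields $\Delta(T) \geq \limsup_k \Delta(T_k) \geq 1$.

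Combining the two inequalities gives $\capa(\eta) \geq \Delta(T)^{2m} \geq 1$. The main obstacle I anticipate is the passage from the combinatorial/algebraic rank non-decreasing hypothesis on $\eta$ to the analytic non-singularity of $T_k$ for cofinally many $k$ (and the associated justification of L\"uck-style upper semi-continuity in this operator-valued setting); the Jensen inequality of the first step is, by contrast, a short computation once the right conditional expectation is identified.
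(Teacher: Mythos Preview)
Your overall architecture---build $T=\sum a_i\otimes u_i$ over the free group, show $\capa(\eta)\geq\Delta(T)^{2m}$ by a Jensen/AM--GM argument, and show $\Delta(T)\geq 1$ from integrality---is exactly the paper's strategy. Your first inequality is correct and is just a repackaging of what the paper does: the paper pre-normalizes to $\widetilde T=\eta(b)^{-1/2}Tb^{1/2}$, observes $E(\widetilde T\widetilde T^\ast)=\1_m$, and applies the scalar AM--GM $\Delta\leq\tr_m\otimes\tau$; unwinding that is precisely your inequality $\Delta(X)\leq\det(E(X))^{1/m}$ applied to $X=T(c\otimes 1)T^\ast$.

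The genuine issue is in your second step. The assertion that rank non-decreasingness of $\eta$ forces $T_k=(\id\otimes\pi_k)(T)$ to be invertible for cofinally many $k$ is false. Take $m=1$, $n=2$, $a_1=a_2=1$, so $\eta(b)=2b$ is rank non-decreasing; but for the quotient $G_k=(\Z/2\Z)^2$ the matrix $T_k=\pi_k(u_1)+\pi_k(u_2)$ has eigenvalues $2,0,0,-2$ and is singular. Fullness of the pencil over the free field gives you nothing about these very non-generic permutation evaluations. The correct route (which L\"uck's approximation theorem already packages for you) is to use the \emph{modified} Fuglede--Kadison determinant $\Delta'$ that ignores the kernel: for an integer matrix the product of the nonzero eigenvalues of $T_k^\ast T_k$ is a nonzero integer (a coefficient of the characteristic polynomial), so $\Delta'(T_k)\geq 1$ regardless of invertibility, and upper semi-continuity gives $\Delta'(T)\geq 1$.

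The paper avoids re-proving any of this: it simply cites the determinantal conjecture for the free group (L\"uck) to get $\Delta'(T)\geq 1$, and then---this is the one point you omit---invokes Linnell's theorem that free Haar unitaries generate the free field, so that rank non-decreasingness of $\eta$ (equivalently, fullness of the pencil) implies $T$ has trivial kernel, whence $\Delta(T)=\Delta'(T)\geq 1$. You should replace your invertibility-of-$T_k$ claim with this argument.
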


The proof of \Cref{thm:1.3} will be given in \Cref{sec:proof_of_thm:1.3}.
The condition that $\eta$ is rank non-decreasing is equivalent to the invertibility of $A = a_1 \otimes \fx_1 + \dots + a_n \otimes \fx_n$ over the noncommutative rational functions in $\fx_1,\dots,\fx_n$, namely the so-called \emph{free field} $\C\plangle \fx_1,\dots,\fx_n\prangle$; see \Cref{subsec:capacity} and \cite{GGOW19}. From \cite{MSY2023}, we learn that $A$ is invertible over $\C\plangle \fx_1,\dots,\fx_n\prangle$ if and only if the associated matrix-valued semicircular operator $S = a_1 \otimes s_1 + \dots + a_n \otimes s_n$ is invertible as an unbounded operator. Thus, \Cref{thm:1.3} gives the following.

\begin{corollary}\label{thm:1.4}
Consider, for integer matrices $a_i\in M_m(\Z)$, the matrix-valued semicircular operator
$S=\sum_{i=1}^n a_i\otimes s_i$.
If $S$ is invertible as an unbounded operator, then we have for its Fuglede-Kadison determinant that
$\Delta(S)\geq e^{-\frac{1}{2}}$.
\end{corollary}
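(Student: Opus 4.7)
The plan is to derive Corollary 1.4 as a direct corollary of the two main theorems already stated, after establishing that the hypothesis "$S$ invertible as an unbounded operator'' translates into the hypothesis "$\eta$ is rank non-decreasing'' required by \Cref{thm:1.3}. So the proof will be essentially a chain of equivalences feeding into a one-line estimate.

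First, I would set up the covariance map $\eta(b) = \sum_{i=1}^n a_i b a_i^*$ associated with $S = \sum_{i=1}^n a_i \otimes s_i$. Since each $a_i \in M_m(\Z)$, the map $\eta$ has integer coefficients in the sense required by \Cref{thm:1.3}. The next step is to translate invertibility of $S$ into a condition on $\eta$: by the result of \cite{MSY2023} quoted in the text, $S$ being invertible as an unbounded operator is equivalent to the formal matrix $A = \sum_{i=1}^n a_i \otimes \fx_i$ being invertible over the free field $\C\plangle \fx_1,\dots,\fx_n\prangle$; and by the discussion in \Cref{subsec:capacity} (following \cite{GGOW19}), this in turn is equivalent to $\eta$ being rank non-decreasing. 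Thus the hypothesis of \Cref{thm:1.3} is satisfied, so $\capa(\eta) \geq 1$.

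Finally, applying the exact formula from \Cref{thm:1.2} gives
\[
\Delta(S) \;=\; \capa(\eta)^{\tfrac{1}{2m}} e^{-\tfrac{1}{2}} \;\geq\; 1^{\tfrac{1}{2m}} e^{-\tfrac{1}{2}} \;=\; e^{-\tfrac{1}{2}},
\]
which is the claimed bound.

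There is no real obstacle here beyond verifying that the chain of equivalences — analytic invertibility of $S$, invertibility of $A$ over the free field, rank non-decreasingness of $\eta$ — is indeed in place; once that is checked, the corollary is a one-line combination of \Cref{thm:1.2} and \Cref{thm:1.3}. The only mild subtlety worth emphasizing is that the bound is genuinely dimension-independent: the exponent $\tfrac{1}{2m}$ on $\capa(\eta)$ would have degraded the GGOW bound $m^{-2m}$ into $e^{-1}$, losing any quantitative content, whereas \Cref{thm:1.3} provides the dimension-free $\capa(\eta) \geq 1$ that makes the final constant $e^{-1/2}$ independent of $m$.
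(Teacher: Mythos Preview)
Your proof is correct and follows exactly the paper's route: the corollary is obtained immediately from \Cref{thm:1.2} and \Cref{thm:1.3} via the chain of equivalences (invertibility of $S$ as an unbounded operator $\Leftrightarrow$ invertibility of $A$ over the free field $\Leftrightarrow$ $\eta$ rank non-decreasing), drawn from \cite{MSY2023} and \cite{GGOW19}, just as the paper explains in the paragraph preceding the corollary. One small arithmetic slip in your closing commentary (not in the proof itself): the GGOW bound $\capa(\eta) \geq m^{-2m}$ would yield $\Delta(S) \geq (m^{-2m})^{1/(2m)} e^{-1/2} = m^{-1} e^{-1/2}$, not $e^{-1}$.
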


This is used in \cite{hoffmann2023computing} to achieve a uniform control of how much mass can accumulate at zero in the distribution of such matrix-valued semicircular operators.

\section{Preliminaries}

\subsection{Fuglede-Kadison determinant}

\begin{definition}
Let $(\M,\tau)$ be a tracial $W^*$-probability space; this means that $\M$ is a finite von Neumann algebra and $\tau:\M\to\C$ is a faithful, normal, tracial state.

\begin{enumerate}
\item 
Let $X\in\M$ be a selfadjoint operator. Then the \emph{distribution} $\mu_X$ of $X$ is the probability measure on $\R$ that is uniquely determined by
$$\tau(X^k)=\int_\R t^kd\mu_X(t)\qquad \text{for all $k\in\N$}.$$

\item 
For a (not necessarily selfadjoint) operator $T\in\M$ its \emph{Fuglede-Kadison determinant} is defined by
$$\Delta(T):=\exp\left(\int_0^\infty \log t\, d\mu_{\vert T\vert}(t)\right)\in [0,\infty),$$
where $\vert T\vert:=(T^*T)^{1/2}$. 
\end{enumerate}
\end{definition}

\begin{remark}\label{rem:FK_definition}
\begin{enumerate}[wide]
    \item\label{it:FK_definition_kernel}
    In the integral $\log (0)$ has to be interpreted as $-\infty$. This means that in case that $T$ has a non-trivial kernel its Fuglede-Kadison determinant is equal to zero. 

    \item
    One should note, however, that the Fuglede-Kadison determinant of $T$ can be zero even if $T$ has trivial kernel. This means that the distribution of $\vert T\vert$ has high accumulation of mass near zero without having an atom there.

    \item 
    We will in particular be interested in operators of the form $T=\sum_{i=1}^n a_i\otimes x_i$, where the $a_i\in M_m(\C)$ are just ordinary matrices, equipped with the normalized trace $\tr_m=\frac 1m\Tr_m$, and $x_1,\dots,x_n$ are operators from a tracial $W^*$-probability space $(\M,\tau)$. The operator $T$ is then considered, and its determinant $\Delta(T)$ calculated, in the tracial $W^*$-probability space $(M_m(\M),\tr_m\otimes \tau)$, where $M_m(\M)=M_m(\C)\otimes\M$. Note that even though we take now the determinant of a matrix with operators from $\M$ as entries, this determinant is not an element in $\M$, but just a non-negative real number. In particular, there is no concrete formula to calculate the determinant in terms of the entries of the matrix.

    \item 
     We will usually consider matrix-valued semicircular operators $S=\sum_{i=1}^n a_i\otimes s_i$ which have zero kernel, and thus have a (possibly unbounded) inverse that is affiliated with its tracial von Neumann algebra. 
    As stated above the triviality of the  kernel does not mean that $\Delta(S)\not=0$. It will however be one of our results that for the special operators we consider this is indeed the case.

    \item 
    One can also define the Fuglede-Kadison determinant $\Delta(S^{-1})$ for the typically unbounded inverse $S^{-1}$. Haagerup and Schulz developed in \cite{haagerup2007brown} a general theory of $\Delta$ for unbounded affiliated operators. We will however not need this in our calculations and estimates, since we never use the inverse of $S$, but only apply $\Delta$ to inverses of finite-dimensional matrices from $M_m(\C)$. 
    \end{enumerate}
\end{remark}

In the following propositions we collect the relevant properties of the Fuglede-Kadison determinant.

\begin{proposition}\label{prop:FK_determinant_properties}
 Let $(\M,\tau)$ be a tracial $W^*$-probability space and $\Delta:\M\to [0,\infty)$ the corresponding Fuglede-Kadison determinant.   

\begin{enumerate}
  
   \item\label{it:FK_determinant_multiplicative}
 The Fuglede-Kadison determinant is multiplicative; that is, we have for all $S,T\in \M$ that
    $$\Delta(ST)=\Delta(S)\Delta(T).$$

     \item\label{it:FK_determinant_adjoint}
     For $T\in\M$ we have
     $$\Delta(T)=\Delta(T^*).$$

\item\label{it:FK_AMGM}
For a positive operator $T\in\M$ we have the following version of the geometric mean-arithmetic mean inequality: 
$$\Delta(T)\leq \tau(T).$$

    \item\label{it:FK_determinant_tensor}
    For $T\in\M$ and $b\in M_m(\C)$ we have
     $$\Delta(\1_m\otimes T)=\Delta(T),\qquad
     \Delta(b\otimes \1)=\Delta(b).$$
     Here, the Fuglede-Kadison determinants of $\1_m\otimes T, b\otimes \1 \in M_m(\M)$ are of course calculated with respect to $\tr_m\otimes\tau$, whereas $\Delta(b)$ is calculated with respect to $(M_m(\C),\tr_m)$.

    \item\label{it:FK_determinant_matrix}
    For $T_1,T_2\in\M$ we have 
    $$\Delta\left(\begin{bmatrix}
        T_1&0\\
        0&T_2
    \end{bmatrix}\right)= 
    \Delta\left(\begin{bmatrix}
        0& T_1\\
        T_2&0
    \end{bmatrix}\right)=
    \Delta(T_1)^{1/2} \cdot\Delta(T_2)^{1/2}.$$
    Here the first two Fuglede-Kadison determinants are of course calculated in $M_2(\M)$ with respect to $\tr_2\otimes \tau$.
    
\item\label{it:FK_determinant_matrices}
If $(\M,\tau)=(M_m(\C),\tr_m)$ then we have for a finite-dimensional matrix $b\in M_m(\C)$
$$\Delta(b)=\det(bb^*)^{\frac 1{2m}}=\det{(\vert b\vert)^{\frac 1m}}.$$

\item\label{it:FK_determinant_doubly_stochastic}
Let $s\in\M$ be a standard semicircular element, meaning that $s$ is a selfadjoint operator the distribution $\mu_s$ of which lives on the interval $[-2,2]$ and is given there by the density
$$d\mu_s(t)=\frac 1{2\pi} \sqrt{4-t^2}\, dt.$$
Then 
$$\Delta(s)=e^{-\frac{1}{2}}.$$

\end{enumerate}
    \end{proposition}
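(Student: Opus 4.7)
The plan is to compute the defining integral $\log\Delta(s)=\int_0^\infty \log t\,d\mu_{|s|}(t)$ directly from the explicit semicircle density. Since $s$ is selfadjoint, the law of $|s|$ is the pushforward of $\mu_s$ under $t\mapsto |t|$, and the symmetry of the density lets me fold the integral onto $[0,2]$:
$$\log\Delta(s)=\int_{-2}^2\log|t|\,d\mu_s(t)=\frac{1}{\pi}\int_0^2\log t\,\sqrt{4-t^2}\,dt.$$
No separate discussion of the $\log 0=-\infty$ convention is needed here, since the smooth factor $\sqrt{4-t^2}/(2\pi)$ is bounded near $t=0$ and so the logarithmic singularity is integrable; in particular $\Delta(s)>0$, which already confirms the qualitative content of item (i) of \Cref{rem:FK_definition}.

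Next I would substitute $t=2\sin\theta$ for $\theta\in[0,\pi/2]$, which gives $\sqrt{4-t^2}=2\cos\theta$ and $dt=2\cos\theta\,d\theta$, transforming the integral into
$$\frac{4}{\pi}\int_0^{\pi/2}\log(2\sin\theta)\,\cos^2\theta\,d\theta.$$
Using $\cos^2\theta=\tfrac12(1+\cos2\theta)$, I split this into two pieces. The first piece, $\frac{2}{\pi}\int_0^{\pi/2}\log(2\sin\theta)\,d\theta$, vanishes by Euler's classical evaluation $\int_0^{\pi/2}\log\sin\theta\,d\theta=-\tfrac{\pi}{2}\log 2$. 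For the second piece, $\frac{2}{\pi}\int_0^{\pi/2}\log(2\sin\theta)\,\cos 2\theta\,d\theta$, I would integrate by parts with $u=\log(2\sin\theta)$ and $dv=\cos 2\theta\,d\theta$, so that $du=\cot\theta\,d\theta$ and $v=\sin\theta\cos\theta$. The boundary term vanishes (at $\theta=\pi/2$ because $v=0$, and at $\theta\to 0^+$ because $\sin\theta\log\sin\theta\to 0$), and $\int v\,du$ reduces to $\int_0^{\pi/2}\cos^2\theta\,d\theta=\pi/4$. Hence the second piece equals $\frac{2}{\pi}\cdot(-\pi/4)=-\tfrac12$.

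Adding the two contributions gives $\log\Delta(s)=-\tfrac12$ and therefore $\Delta(s)=e^{-1/2}$. There is no genuine obstacle in this argument: the proof is entirely a finite trigonometric computation resting on one classical definite integral and one integration by parts. The only mild subtlety is verifying the integrability at the origin and the vanishing of the boundary term, both of which are immediate from the elementary asymptotics of $\sin\theta\log\sin\theta$ near $0$.
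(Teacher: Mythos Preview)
Your proposal addresses only item \ref{it:FK_determinant_doubly_stochastic} of the proposition, leaving items \ref{it:FK_determinant_multiplicative}--\ref{it:FK_determinant_matrices} untouched; the paper disposes of those by citing \cite{fuglede1952determinant}, \cite{haagerup2007brown}, and \cite{nayak2018hadamard}, together with a short manipulation for \ref{it:FK_determinant_matrix}, so you would still need to supply those references or arguments.

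For item \ref{it:FK_determinant_doubly_stochastic} itself, your argument is correct and follows exactly the same route as the paper: identify the law of $|s|$ as the quarter-circle density $\frac{1}{\pi}\sqrt{4-t^2}\,dt$ on $[0,2]$ and assert that $\frac{1}{\pi}\int_0^2 \log t\,\sqrt{4-t^2}\,dt=-\tfrac12$. The paper simply states this integral value, whereas you carry out the evaluation via the substitution $t=2\sin\theta$, Euler's formula $\int_0^{\pi/2}\log\sin\theta\,d\theta=-\tfrac{\pi}{2}\log 2$, and an integration by parts; all steps check out, including the vanishing of the boundary term.
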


    \begin{proof}
    Most of those properties are already contained in the original paper \cite{fuglede1952determinant}; however, since we need them also for ''singular'' operators, which have no bounded inverse, some of them are hidden in Section 5 of \cite{fuglede1952determinant}, and we will provide in the following also more explicit references.
        \begin{enumerate}
            \item 
            See Prop. 2.5 in \cite{haagerup2007brown} or Remark 2.3 in \cite{nayak2018hadamard}.

            \item 
This is clear, since $\vert T^*\vert$ and $\vert T\vert$ have in our tracial setting the same distribution.

\item 
See Lemma 3.4. in \cite{nayak2018hadamard}.

\item 
This follows directly from the fact that the distribution of $\vert \1_m\otimes T\vert=\1_m\otimes\vert T\vert$ is the same as the distribution of $\vert T\vert$, and the distribution of $\vert b\otimes \1\vert=\vert b\vert\otimes \1$ is the same as the distribution of $\vert b\vert$.

            \item 
            For the calculation of the first term, see Remark 3.5 in \cite{nayak2018hadamard}.
The statement for the second term follows from this by observing that the absolute values of both matrices are the same, or, more explicitly, as follows.

        \begin{align*}
        \Delta\left(\begin{bmatrix}
        0& T_1\\
        T_2&0
    \end{bmatrix}\right)&= 
    \Delta\left(
\begin{bmatrix}
        0&1\\
        1&0
    \end{bmatrix}
\begin{bmatrix}
        T_2&0\\
        0&T_1
    \end{bmatrix}
    \right)\\[0.3ex]
    &=
\Delta\left(
\begin{bmatrix}
        0&1\\
        1&0
    \end{bmatrix}\right)\cdot
    \Delta\left(
\begin{bmatrix}
        T_2&0\\
        0&T_1
    \end{bmatrix}
    \right)\\[0.3ex]
    &=
    \Delta(T_2)^{1/2} \cdot\Delta(T_1)^{1/2}.
        \end{align*}

            \item 
            See Example 2.4 in \cite{nayak2018hadamard}.
            
            \item 
            This follows from the fact that $\vert s\vert$ has a quarter-circular distribution on $[0,2]$ with density
            $$d\mu_{\vert s\vert}(t)=\frac 1\pi \sqrt{4-t^2}\, dt$$
            and that 
            \[
            \frac 1\pi \int_0^2 \log t\, \sqrt{4-t^2}\, dt=-\frac 12. \qedhere
            \]
            
        \end{enumerate}
            
    \end{proof}

Crucial for most arguments is the following upper semi-continuity property of $\Delta$. 
The following version with respect to convergence in distribution goes back to \cite{schick2001}; a more explicit formulation and proof can be found in \cite{sofic}.
One should note that in this group context L\"uck \cite{luck2002l2} introduced a modified version of the Fuglede-Kadison determinant, which ignores the kernel of the operator. Thus we require in the following explicitly that all involved operators have trivial kernel, which guarantees that our determinant is the same as the modified one.

\begin{proposition}\label{prop:FK_determinant_lower_semicontinuity}
    Let $T_n$ ($n\in\N$) and $T$ be positive operators in some tracial $W^*$-probability space $(\M,\tau)$, such that $(T_n)_{n\in\N}$ converges in distribution to $T$, that is
$$\lim_{n\to\infty}\tau(T_n^k)=\tau(T^k)\qquad\text{for all $k\in\N$}.$$
Assume that all $T_n$ and $T$ have trivial kernel and that $\sup_n\Vert T_n\Vert\leq\infty$. Then we have
$$\Delta(T)\geq \limsup_{n\to\infty} \Delta(T_n).$$
\end{proposition}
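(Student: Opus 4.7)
The plan is to rewrite $\log\Delta(T)=\int_0^\infty\log t\,d\mu_T(t)$ and similarly for each $T_n$, and then deduce the desired $\limsup$-inequality from an upper-semicontinuous portmanteau theorem applied to $\log$.

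First I would upgrade the hypothesized convergence in moments to weak convergence of the spectral distributions $\mu_{T_n}\to\mu_T$. Set $C:=\sup_n\|T_n\|<\infty$; each $\mu_{T_n}$ is then supported on $[0,C]$, and taking $k$-th roots in $\tau(T^k)=\lim_n\tau(T_n^k)\le C^k$ yields $\|T\|\le C$, so $\mu_T$ is supported on $[0,C]$ as well. By the Weierstrass approximation theorem, polynomials are dense in $C([0,C])$, so moment convergence of probability measures with a common compact support is equivalent to weak convergence on $[0,C]$.

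Next I would observe that the function $\log\colon[0,C]\to[-\infty,\log C]$, with the convention $\log 0:=-\infty$, is upper semi-continuous and bounded above by $\log C$. The key analytic ingredient is the following upper-semicontinuous form of portmanteau: if $\mu_n\to\mu$ weakly on a compact metric space and $f$ is upper semi-continuous and bounded above (possibly taking the value $-\infty$), then $\limsup_n\int f\,d\mu_n\le\int f\,d\mu$. I would derive this from the bounded version by truncation: set $f_k:=f\vee(-k)$, which is bounded and upper semi-continuous; apply the standard portmanteau inequality to $f_k$ together with $\int f\,d\mu_n\le\int f_k\,d\mu_n$ to get $\limsup_n\int f\,d\mu_n\le\int f_k\,d\mu$ for every $k$; then let $k\to\infty$, noting $\int f_k\,d\mu\downarrow\int f\,d\mu$ by monotone convergence (the one-sided boundedness $f\le\log C$ legitimizes this even if the limit is $-\infty$).

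Applied with $f=\log$, this gives
$$\limsup_{n\to\infty}\log\Delta(T_n)\;=\;\limsup_{n\to\infty}\int_0^\infty\log t\,d\mu_{T_n}(t)\;\le\;\int_0^\infty\log t\,d\mu_T(t)\;=\;\log\Delta(T),$$
and exponentiating, with $e^{-\infty}:=0$, yields $\limsup_n\Delta(T_n)\le\Delta(T)$. The trivial-kernel hypothesis does not actually enter this analytic reasoning; as emphasized in the paragraph preceding the proposition, its role is merely to guarantee that $\Delta$ coincides with L\"uck's kernel-ignoring variant, so that the statement is the same in both conventions. The only subtle step is extending portmanteau to integrands that may equal $-\infty$ near zero, which is exactly the place where the feared mass at zero of $\mu_T$ could spoil the inequality; the truncation argument above disposes of it cleanly, and this is where I would expect the reader to pause if anywhere.
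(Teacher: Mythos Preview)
Your argument is correct. The paper does not actually supply a proof of this proposition; it only states the result and refers the reader to Schick \cite{schick2001} and to \cite{sofic} for an explicit formulation and proof. So there is no in-paper proof to compare against beyond noting that your approach---upgrade moment convergence on a common compact support to weak convergence via Stone--Weierstrass, then apply the upper-semicontinuous portmanteau inequality to $\log$ by truncating at $-k$ and passing to the limit---is exactly the standard route and is presumably what the cited references do as well.

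Two small remarks. First, your observation that the trivial-kernel hypothesis plays no role in the analytic argument is correct: even if $\mu_T(\{0\})>0$, the truncation step still yields $\int f_k\,d\mu_T\to-\infty$, forcing $\limsup_n\Delta(T_n)=0=\Delta(T)$; the hypothesis is indeed only there to reconcile the two determinant conventions, as you say. Second, the passage from $\limsup_n\log\Delta(T_n)\le\log\Delta(T)$ to $\limsup_n\Delta(T_n)\le\Delta(T)$ is justified because $\exp$ (extended by $e^{-\infty}=0$) is continuous and increasing, hence commutes with $\limsup$; you might want to say this in one sentence rather than leave it implicit.
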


\subsection{Capacity}\label{subsec:capacity}

Let $\eta: M_m(\C) \to M_m(\C)$ be a linear map. We denote by $\eta^\ast: M_m(\C) \to M_m(\C)$ the \emph{dual} to $\eta$ with respect to the inner product $\langle\cdot,\cdot\rangle$ on $M_m(\C)$ which is defined by $\langle x,y \rangle := \tr_m(xy^\ast)$, that is,
$$\langle \eta(x),y \rangle = \langle x, \eta^\ast(y) \rangle \qquad\text{for all $x,y\in M_m(\C)$}.$$

For matrices $b_1,b_2 \in M_m(\C)$, we write $b_2 \geq b_1$ if $b_2-b_1$ is positive semidefinite and we write $b_2>b_1$ if $b_2-b_1$ is positive definite.

\begin{definition}\label{def:positive_linear_maps}
A linear map $\eta: M_m(\C) \to M_m(\C)$ is said to be
\begin{enumerate}
 \item \emph{positive} if $\eta(b) \geq 0$ for all $b\geq 0$;
 \item \emph{completely positive} if all ampliations $\eta^{(k)}: M_k(\C) \otimes M_m(\C) \to M_k(\C) \otimes M_m(\C)$ with $\eta^{(k)} := \id_{M_k(\C)} \otimes \eta$ for $k\in \N$ are positive;
 \item \emph{strictly positive} if there exists $\alpha>0$ such that $\eta(b) \geq \alpha \tr_m(b) \1_m$ holds true for all $b\geq 0$.
\end{enumerate}
A positive linear map $\eta: M_m(\C) \to M_m(\C)$ is said to be
\begin{enumerate}
\setcounter{enumi}{3}
 \item \emph{rank non-decreasing} if $\rank(\eta(b)) \geq \rank(b)$ for all $b\geq 0$;
 \item\label{it:indecomposable} \emph{indecomposable} if we have $\rank(\eta(b)) > \rank(b)$ for all $b\geq 0$ which satisfy $1\leq \rank(b) < m$;
 \item \emph{doubly stochastic} if both $\eta(\1_m)=\1_m$ and $\eta^\ast(\1_m)=\1_m$.
\end{enumerate}
\end{definition}

\begin{remark}\label{rem:positive_linear_maps}
\begin{enumerate}[wide]
 \item\label{it:semi-flat_vs_strictly_positive} Linear maps $\eta: M_m(\C) \to M_m(\C)$ which are strictly positive have also appeared in the literature under the name \emph{semi-flat}; see \cite{AjEK2019,MSY2018,BM2020,BM2023}, for instance.
 \item\label{it:strict_positivity_implies_indecomposability} Note that strict positivity implies indecomposability. Indeed, if there is $\alpha>0$ such that $\eta(b) \geq \alpha \tr_m(b) \1_m$ holds true for all $b\geq 0$, then $\eta(b)$ has full rank for all positive semidefinite matrices $0 \neq b \in M_m(\C)$; hence, we have $\rank(\eta(b)) = m > \rank(b)$ for each $b\geq 0$ with $1\leq \rank(b) < m$.
 \item Clearly, indecomposable positive linear maps are rank non-decreasing.
\end{enumerate}
\end{remark}

According to the definitions, $\capa(\eta)>0$ ensures that there is no positive matrix which is mapped to a singular matrix by $\eta$, whereas a rank non-decreasing $\eta$ does not reduce the rank of any positive semidefinite matrix. Remarkably, these two conditions are still equivalent.

\begin{proposition}[Lemma 4.5 in \cite{Gur2004}]\label{prop:capacity_positive}
A positive linear map $\eta: M_m(\C) \to M_m(\C)$ is rank non-decreasing if and only if $\capa(\eta) > 0$.
\end{proposition}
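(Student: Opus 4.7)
I would prove the two implications separately. The easy direction is a quick perturbation argument, whereas the hard direction requires a compactness/scaling argument whose main obstacle is the non-compactness of $\{b>0 : \det(b)=1\}$.

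For the easy direction ($\capa(\eta)>0 \Rightarrow \eta$ rank non-decreasing), I argue by contraposition. Suppose $b_0\geq 0$ satisfies $r:=\rank(b_0) > r':=\rank(\eta(b_0))$. Consider the strictly positive perturbation $b_\epsilon:=b_0+\epsilon\1_m$ and its normalization $\tilde b_\epsilon:=\det(b_\epsilon)^{-1/m}b_\epsilon$, so that $\det(\tilde b_\epsilon)=1$. Homogeneity of the determinant gives
\[
\det(\eta(\tilde b_\epsilon))=\frac{\det(\eta(b_\epsilon))}{\det(b_\epsilon)}.
\]
The key elementary ingredient is that, by multilinearity, $\det(A+\epsilon B)$ is a polynomial in $\epsilon$ whose coefficient of $\epsilon^k$ involves only $(m-k)\times(m-k)$ minors of $A$, which vanish for $k<m-\rank(A)$; hence $\det(A+\epsilon B)=O(\epsilon^{m-\rank(A)})$ as $\epsilon\to 0^+$. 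Applied to numerator and denominator this yields $\det(b_\epsilon)=\Theta(\epsilon^{m-r})$ (the leading constant being the product of the nonzero eigenvalues of $b_0$) and $\det(\eta(b_\epsilon))=O(\epsilon^{m-r'})$, so $\det(\eta(\tilde b_\epsilon))=O(\epsilon^{r-r'})\to 0$ since $r>r'$. Thus $\capa(\eta)=0$.

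For the hard direction, suppose toward a contradiction that $\capa(\eta)=0$ and pick a minimizing sequence $b_n>0$ with $\det(b_n)=1$ and $\det(\eta(b_n))\to 0$. If $(\|b_n\|)_n$ is bounded, then $b_n$ lives in a compact subset of the positive definite matrices (the constraint $\det(b_n)=1$ forces its smallest eigenvalue to stay away from zero), so a subsequential limit $b_\infty>0$ exists with $\det(\eta(b_\infty))=0$ by continuity; this contradicts $\rank(\eta(b_\infty))\geq\rank(b_\infty)=m$.

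The complementary case $\|b_n\|\to\infty$ is the main obstacle. A single rescaling $c_n:=b_n/\|b_n\|$ produces, along a subsequence, a singular limit $c_\infty\geq 0$ with $\|c_\infty\|=1$; since $\det(c_n)=\|b_n\|^{-m}\to 0$ and $\det(\eta(c_n))=\|b_n\|^{-m}\det(\eta(b_n))\to 0$, both $c_\infty$ and $\eta(c_\infty)$ are singular. Unfortunately this only gives the weak inequality $\rank(\eta(c_\infty))\geq\rank(c_\infty)$ (from the hypothesis), not the strict rank drop needed for a contradiction. To close this gap, my plan is to exploit the $GL_m$-invariance $\capa(g\,\eta(g^*\cdot g)\,g^*)=|\det(g)|^4\,\capa(\eta)$ — which also preserves the rank non-decreasing property — to rescale $\eta$ into a ``doubly stochastic'' form satisfying $\eta(\1_m)=\1_m=\eta^*(\1_m)$. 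For such an $\eta$ the Jensen-type inequality $\det(\eta(b))\geq\det(b)$ for all $b>0$ immediately gives $\capa(\eta)\geq 1$. Showing that rank non-decreasing is precisely what permits such a scaling — equivalently, carrying out an iterative multi-scale refinement ruling out the escape-to-infinity seen in the minimizing sequence above — is the technical heart of the argument.
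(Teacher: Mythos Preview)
The paper does not give its own proof of this proposition; it is simply quoted from Gurvits' paper \cite{Gur2004} (Lemma~4.5 there), so there is nothing in the present paper to compare your argument against.

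On the merits of your plan: the easy direction (rank drop $\Rightarrow \capa(\eta)=0$) via the perturbation $b_\epsilon=b_0+\epsilon\1_m$ is correct and is essentially the standard argument.

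The hard direction, however, has a genuine gap. Your proposal is to use the $GL_m$-scaling to bring $\eta$ to doubly stochastic form and then invoke $\det(\eta(b))\geq\det(b)$. Two problems arise. First, exact scaling to a doubly stochastic map is only guaranteed for \emph{indecomposable} $\eta$ (this is exactly \Cref{prop:indecomposable_minimizer}); a rank non-decreasing but decomposable $\eta$ need not admit such a scaling, as the infimum defining $\capa(\eta)$ is then not attained. Second, and more fundamentally, the assertion that ``rank non-decreasing is precisely what permits such a scaling'' is circular in this context: the standard route to establishing (approximate) scalability --- e.g.\ via convergence of Sinkhorn iteration --- already presupposes $\capa(\eta)>0$, which is what you are trying to prove.

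What is missing from your outline is the key structural fact that makes the ``escape to infinity'' analysis tractable: the function $H\mapsto\log\det(\eta(e^H))$ is convex on Hermitian matrices (geodesic convexity of capacity). With this in hand, one can study the behavior of the objective along rays $t\mapsto tH_0$ as $t\to+\infty$; either it stays bounded below (so $\capa(\eta)>0$), or it tends to $-\infty$, and in the latter case the asymptotic slope forces $\eta$ to drop rank on the spectral projection associated with the top eigenspaces of $H_0$. This convexity is the substitute for the compactness you lose when $\|b_n\|\to\infty$, and it is the real content of Gurvits' argument that your ``iterative multi-scale refinement'' would have to reproduce.
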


Note that the infimum in the definition of the capacity is not a minimum in general. In the indecomposable case, however, the infimum is attained and the minimizer is unique. It follows that any indecomposable map can be transformed via operator scaling into a doubly stochastic positive linear map. The precise statement is given in the next proposition.

Following \cite[Definition 4.2]{Gur2004} and \cite{GGOW19}, we define for a positive linear map $\eta: M_m(\C) \to M_m(\C)$ and arbitrary matrices $c_1,c_2 \in M_m(\C)$ the \emph{operator scaling} $\eta_{c_1,c_2}: M_m(\C) \to M_m(\C)$ of $\eta$ as the positive linear map determined by
$$\eta_{c_1,c_2}(b) := c_1 \eta(c_2^\ast b c_2) c_1^\ast.$$
Note that $(\eta_{c_1,c_2})^\ast = (\eta^\ast)_{c_2,c_1}$.

\begin{proposition}[Theorem 4.7 in \cite{Gur2004}]\label{prop:indecomposable_minimizer}
A positive linear map $\eta: M_m(\C) \to M_m(\C)$ is indecomposable if and only if there exists a unique positive definite matrix $c \in M_m(\C)$ with $\det(c)=1$ and $\det(\eta(c)) = \capa(\eta)$. In this case, the operator scaling $\eta_{\eta(c)^{-1/2},c^{1/2}}$ of $\eta$ is doubly stochastic.
\end{proposition}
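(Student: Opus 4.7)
The plan is to follow the operator-scaling strategy of \cite{Gur2004}: recast the capacity as a minimization problem and exploit first-order optimality together with coercivity and convexity. Define $F(b) := \log\det(\eta(b)) - \log\det(b)$ on the open cone of positive definite $b \in M_m(\C)$. Since $F(tb) = F(b)$ for every $t > 0$, minimizing $\det(\eta(b))$ over $\{b>0:\det(b)=1\}$ is equivalent to minimizing $F$ on the entire positive definite cone.

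The main obstacle is existence of a minimizer under the indecomposability hypothesis; I would argue via coercivity. Take a minimizing sequence $(b_n)$ with $\det(b_n) = 1$ and, along a subsequence, let $\tilde b_n := b_n/\|b_n\|$ converge to some $\tilde b \geq 0$ with $\|\tilde b\| = 1$. If $\|b_n\|$ stays bounded, then $\det(b_n) = 1$ also forces the smallest eigenvalue of $b_n$ to stay away from $0$, so $b_n$ subconverges to a positive definite minimizer. Otherwise $\|b_n\| \to \infty$, which forces $\det(\tilde b) = 0$ and hence $R := \rank(\tilde b) \in \{1,\dots,m-1\}$. Indecomposability yields $\rank(\eta(\tilde b)) \geq R+1$, so at most $m-R-1$ eigenvalues of $\eta(\tilde b_n)$ tend to $0$. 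Writing $\det(\eta(b_n)) = \|b_n\|^m \det(\eta(\tilde b_n))$ and carefully matching the decay of the $m-R$ small eigenvalues of $\tilde b_n$ (whose product equals $\|b_n\|^{-m}$ up to a bounded factor) against the at most $m-R-1$ small eigenvalues of $\eta(\tilde b_n)$ then yields $\det(\eta(b_n)) \to \infty$, contradicting the minimizing property. The delicate point is that it is precisely the rank gain coming from indecomposability, together with the way it limits the number of decaying eigenvalues in $\eta(\tilde b_n)$, that drives this coercivity estimate.

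At a positive definite minimizer $c$ of $F$, differentiating in any Hermitian direction $h$ gives
\[
0 = \tr_m(\eta(c)^{-1}\eta(h)) - \tr_m(c^{-1}h) = \tr_m\bigl((\eta^\ast(\eta(c)^{-1})-c^{-1})h\bigr),
\]
so $\eta^\ast(\eta(c)^{-1}) = c^{-1}$. Setting $c_1 := \eta(c)^{-1/2}$ and $c_2 := c^{1/2}$, one computes $\eta_{c_1,c_2}(\1_m) = c_1 \eta(c) c_1^\ast = \1_m$ and $(\eta_{c_1,c_2})^\ast(\1_m) = (\eta^\ast)_{c_2,c_1}(\1_m) = c_2 \eta^\ast(\eta(c)^{-1}) c_2^\ast = \1_m$, so $\eta_{c_1,c_2}$ is doubly stochastic.

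Uniqueness of $c$ follows from the strict convexity of $X \mapsto \log\det(\eta(e^X))$ on the traceless subspace of Hermitian matrices — a Lieb-type fact exploited in \cite{Gur2004} — which, under indecomposability, forbids two distinct critical points. For the converse, if a unique positive definite minimizer $c$ with $\det(c)=1$ exists, then $\eta_{c_1,c_2}$ is doubly stochastic with $\1_m$ as its unique minimizer; should $\eta$ fail to be indecomposable, a rank-preserving positive matrix for $\eta_{c_1,c_2}$ would produce, via operator scaling, a nontrivial one-parameter family of doubly stochastic scalings of $\eta$, contradicting uniqueness. Since indecomposability is preserved under operator scaling by invertible matrices, this completes the equivalence.
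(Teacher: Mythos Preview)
The paper does not give its own proof of this proposition; it is quoted verbatim as Theorem~4.7 of \cite{Gur2004} and used as a black box. So there is nothing in the paper to compare your argument against.

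On its own merits, your outline follows the standard operator-scaling strategy and the first-order computation is correct: from $\eta^\ast(\eta(c)^{-1})=c^{-1}$ one indeed gets that $\eta_{\eta(c)^{-1/2},c^{1/2}}$ is doubly stochastic, and the appeal to strict convexity of $X\mapsto\log\det(\eta(e^X))$ for uniqueness is the right mechanism. The one place where your sketch is genuinely incomplete is the coercivity step. Knowing that $\eta(\tilde b)$ has rank at least $R+1$ tells you that at most $m-R-1$ eigenvalues of $\eta(\tilde b_n)$ tend to zero, but it gives no a~priori control on \emph{how fast} they vanish; without such control you cannot conclude that $\|b_n\|^m\det(\eta(\tilde b_n))\to\infty$ by ``matching'' against the $m-R$ small eigenvalues of $\tilde b_n$. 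The usual way to close this (as in \cite{Gur2004}) is to work instead along rays $t\mapsto e^{tX}$ for traceless Hermitian $X$ and show that $t\mapsto\log\det(\eta(e^{tX}))$ is convex with asymptotic slope strictly positive whenever $X\neq 0$, the strict positivity coming precisely from the rank gain. Your converse argument is also only heuristic: you should make precise how a rank-preserving $b\geq 0$ for the doubly stochastic $\eta_{c_1,c_2}$ actually produces a second minimizer (or a nontrivial flat direction of $F$), rather than just a ``family of scalings.''
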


Note that if $\eta: M_m(\C) \to M_m(\C)$ is rank non-decreasing, then automatically $(\eta_{\eta(c)^{-1/2},c^{1/2}})^\ast(\1_m) = \1_m$ for all positive definite matrices $c \in M_m(\C)$.

For two positive linear maps $\eta_0,\eta_1: M_m(\C) \to M_m(\C)$, we write $\eta_1 \geq \eta_0$ if the linear map $\eta_1 - \eta_0: M_m(\C) \to M_m(\C)$ is positive. We note that capacity is monotonic with respect to this partial order on positive linear maps and we even have that the function $\eta \mapsto \capa(\eta)^{1/m}$ is superadditive on the set of all positive linear maps $\eta: M_m(\C) \to M_m(\C)$. This result is certainly well-known, but since we could not locate it in the literature, we include the proof.

\begin{proposition}\label{prop:capacity_monotonicity}
Let $\eta_0,\eta_1: M_m(\C) \to M_m(\C)$ be positive linear maps satisfying $\eta_1 \geq \eta_0$. Then
$$\capa(\eta_1)^{1/m} \geq \capa(\eta_1-\eta_0)^{1/m} + \capa(\eta_0)^{1/m}$$
and in particular $\capa(\eta_1) \geq \capa(\eta_0)$.
\end{proposition}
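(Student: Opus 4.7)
The plan is to reduce the statement to a pointwise inequality on $M_m(\C)$ and then take infima. Since $\eta_1 \geq \eta_0$, the difference $\eta_1 - \eta_0$ is itself a positive linear map, so for every $b > 0$ with $\det(b) = 1$ both $\eta_0(b)$ and $(\eta_1-\eta_0)(b)$ are positive semidefinite, and we have the decomposition
\begin{equation*}
\eta_1(b) = \eta_0(b) + (\eta_1 - \eta_0)(b).
\end{equation*}

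The key ingredient I would invoke is Minkowski's determinant inequality: for any positive semidefinite matrices $A, B \in M_m(\C)$,
\begin{equation*}
\det(A+B)^{1/m} \geq \det(A)^{1/m} + \det(B)^{1/m}.
\end{equation*}
Applied with $A = \eta_0(b)$ and $B = (\eta_1-\eta_0)(b)$, this yields
\begin{equation*}
\det(\eta_1(b))^{1/m} \geq \det(\eta_0(b))^{1/m} + \det((\eta_1-\eta_0)(b))^{1/m}
\end{equation*}
for every admissible $b$.

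To finish, I would take the infimum over all $b > 0$ with $\det(b) = 1$ on both sides. Using the elementary bound $\inf_b (f(b) + g(b)) \geq \inf_b f(b) + \inf_b g(b)$ on the right-hand side and recognizing the definition of capacity on the left gives
\begin{equation*}
\capa(\eta_1)^{1/m} \geq \capa(\eta_0)^{1/m} + \capa(\eta_1-\eta_0)^{1/m},
\end{equation*}
which is the claimed superadditivity. The final assertion $\capa(\eta_1) \geq \capa(\eta_0)$ is then immediate, since $\capa(\eta_1 - \eta_0)^{1/m} \geq 0$ (the infimum of non-negative determinants). There is no real obstacle here beyond identifying Minkowski's inequality as the right tool; the inf-of-sum step is purely formal, and the positivity of $\eta_1 - \eta_0$ ensures the Minkowski inequality applies to genuine positive semidefinite matrices.
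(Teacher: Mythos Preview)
Your proof is correct and follows essentially the same approach as the paper: apply Minkowski's determinant inequality pointwise to $\eta_1(b) = \eta_0(b) + (\eta_1-\eta_0)(b)$, then pass to the infimum over admissible $b$. The only cosmetic difference is that the paper bounds each summand on the right by the corresponding capacity before taking the infimum on the left, whereas you take infima on both sides and invoke $\inf(f+g)\geq \inf f + \inf g$; these are equivalent.
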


\begin{proof}
Let $b \in M_m(\C)$ with $b > 0$ and $\det(b) =1$ be given. With the help of Minkowski's determinant inequality (see Theorem 13.5.4 and Exercise 13.5.1 in \cite{Mirsky1955}, for instance), we get that
\begin{multline*}
\det(\eta_1(b))^{1/m} \geq \det((\eta_1-\eta_0)(b))^{1/m} + \det(\eta_0(b))^{1/m}\\ \geq \capa(\eta_1-\eta_0)^{1/m} + \capa(\eta_0)^{1/m}
\end{multline*}
and hence, by passing to the infimum over all $b \in M_m(\C)$ with $b > 0$ and $\det(b) =1$, that $\capa(\eta_1)^{1/m} \geq \capa(\eta_1-\eta_0)^{1/m} + \capa(\eta_0)^{1/m}$, as asserted. The inequality $\capa(\eta_1) \geq \capa(\eta_0)$ is an immediate consequence of the latter because $\capa(\eta_1-\eta_0) \geq 0$ holds true by definition of capacity; alternatively, this can be deduced from the fact (see Exercise 12 in Section 82 of \cite{Halmos1974}, for instance) that $\det(\eta_1(b)) \geq \det(\eta_0(b))$ for all $b \in M_m(\C)$ with $b \geq 0$.
\end{proof}

In order to measure how much a positive linear map $\eta$ deviates from being doubly stochastic, we use the quantity
$$\ds(\eta) := \Tr_m\big((\eta(\1_m)-\1_m)^2\big) + \Tr_m\big((\eta^\ast(\1_m)-\1_m)^2\big)$$
introduced in \cite[Definition 2.3]{Gur2004}. The following proposition tells us that approximate minimizers of the infimum defining the capacity $\capa(\eta)$ can be used to transform $\eta$ by operator scaling into completely positive linear maps $\widetilde{\eta}$ for which $\ds(\widetilde{\eta})$ is arbitrarily small; this can be seen as a quantified version of \Cref{prop:indecomposable_minimizer}.

\begin{proposition}[Lemma 3.10 in \cite{GGOW19}]\label{prop:approximate_minimizer}
Let $\eta: M_m(\C) \to M_m(\C)$ be completely positive with $\capa(\eta) > 0$. Suppose that $c \in M_m(\C)$ is a positive definite matrix such that
$$\capa(\eta) \geq e^{-\delta} \cdot \frac{\det(\eta(c))}{\det(c)},$$
for some $\delta \in (0,\frac{1}{6}]$. Then
$$\ds(\eta_{\eta(c)^{-1/2},c^{1/2}}) = \Tr_m\big((c \eta^\ast(\eta(c)^{-1}) - \1_m)^2\big) \leq 6\delta.$$
\end{proposition}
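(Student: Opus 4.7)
The proof proceeds in two stages: first, a reduction to the unital case via the operator scaling; then an analytic estimate extracted from a carefully chosen exponential perturbation.

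\textbf{Stage 1 (reduction).} Set $\widetilde{\eta}:=\eta_{\eta(c)^{-1/2},\,c^{1/2}}$. Directly, $\widetilde{\eta}(\1_m)=\eta(c)^{-1/2}\eta(c)\eta(c)^{-1/2}=\1_m$, and the identity $(\eta_{c_1,c_2})^\ast=(\eta^\ast)_{c_2,c_1}$ yields $\widetilde{\eta}^\ast(\1_m)=c^{1/2}\eta^\ast(\eta(c)^{-1})c^{1/2}$. Using $\det(c^{1/2}bc^{1/2})=\det(c)\det(b)$ and changing variables in the infimum defining capacity produces the scaling identity $\capa(\widetilde{\eta})=\frac{\det(c)}{\det(\eta(c))}\capa(\eta)\geq e^{-\delta}$ by the hypothesis. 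Because $c^{1/2}\eta^\ast(\eta(c)^{-1})c^{1/2}$ and $c\,\eta^\ast(\eta(c)^{-1})$ are conjugate via $c^{1/2}$, they share the same spectrum, so $\Tr_m((\widetilde{\eta}^\ast(\1_m)-\1_m)^2)=\Tr_m((c\,\eta^\ast(\eta(c)^{-1})-\1_m)^2)$. Since $\widetilde{\eta}(\1_m)-\1_m=0$, this common quantity is exactly $\ds(\widetilde{\eta})$, reducing the statement to the bound $\Tr_m(D^2)\leq 6\delta$ for $D:=\widetilde{\eta}^\ast(\1_m)-\1_m$ under the hypotheses that $\widetilde{\eta}$ is completely positive, unital, and has $\capa(\widetilde{\eta})\geq e^{-\delta}$.

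\textbf{Stage 2 (main estimate).} Unitality of $\widetilde{\eta}$ forces $\Tr_m(D)=0$ (since $\Tr_m(\widetilde{\eta}^\ast(\1_m))=\Tr_m(\widetilde{\eta}(\1_m))=m$), so the test matrices $b_t:=e^{-tD}$ have $\det(b_t)=1$ for every $t\in\R$. The capacity inequality yields
$$g(t):=\log\det\widetilde{\eta}(e^{-tD})\geq-\delta\qquad(t\in\R).$$
Writing $\alpha:=\Tr_m(D^2)$, direct differentiation at $t=0$ gives $g(0)=0$, $g'(0)=-\Tr_m(D\,\widetilde{\eta}^\ast(\1_m))=-\Tr_m(D(\1_m+D))=-\alpha$, and
$$g''(0)=\Tr_m(D^2\widetilde{\eta}^\ast(\1_m))-\Tr_m(\widetilde{\eta}(D)^2)=\alpha+\Tr_m(D^3)-\Tr_m(\widetilde{\eta}(D)^2).$$
The Kadison--Schwarz inequality $\widetilde{\eta}(D)^2\leq\widetilde{\eta}(D^2)$ for the unital completely positive map $\widetilde{\eta}$ gives $g''(0)\geq 0$; conversely, the trivial bound $\Tr_m(\widetilde{\eta}(D)^2)\geq 0$ together with $|\Tr_m(D^3)|\leq\|D\|_\infty\alpha\leq\alpha^{3/2}$ yields the crude upper bound $g''(0)\leq\alpha+\alpha^{3/2}$. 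Minimising the quadratic model $t\mapsto-t\alpha+\tfrac{t^2}{2}g''(0)$ at $t^\ast=\alpha/g''(0)$ and invoking $g(t^\ast)\geq-\delta$ gives $\alpha^2\leq 2\delta\cdot g''(0)\leq 2\delta(\alpha+\alpha^{3/2})$, i.e.\ $\alpha\leq 2\delta(1+\sqrt{\alpha})$. Solving this elementary inequality under $\delta\leq 1/6$ yields $\alpha\leq 6\delta$, as required.

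\textbf{Principal obstacle.} The preceding analysis is only heuristic until one controls the Taylor remainder of $g$ uniformly on an interval large enough to contain $t^\ast$. The natural way to discharge this is to complement the capacity lower bound $g(t)\geq-\delta$ by the AM--GM upper bound $\log\det A\leq m\log\tr_m(A)$ applied to the positive operator $A=\widetilde{\eta}(e^{-tD})$: since $\widetilde{\eta}^\ast(\1_m)=\1_m+D$, a direct computation gives $\tr_m\widetilde{\eta}(e^{-tD})=\frac{1}{m}\sum_i(1+\nu_i)e^{-t\nu_i}$ in terms of the eigenvalues $\nu_i$ of $D$, so the problem reduces to a fully scalar inequality in the $\nu_i$'s that can be optimised over $t>0$ without any remainder issues. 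The slack between the heuristic bound $\alpha\leq 2\delta(1+\sqrt{\alpha})$ and the target $6\delta$, which is strict for every $\delta\leq 1/6$, comfortably absorbs the higher-order corrections.
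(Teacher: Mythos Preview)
The paper does not prove this proposition; it merely quotes it as Lemma~3.10 of \cite{GGOW19}. So there is no ``paper's own proof'' to compare against, and your attempt must stand on its own.

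Your Stage~1 reduction is correct, as are the derivative computations at the beginning of Stage~2: $g(0)=0$, $g'(0)=-\alpha$, and the expression for $g''(0)$ are right.

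The genuine gap is the step ``invoking $g(t^\ast)\geq -\delta$ gives $\alpha^2\leq 2\delta\, g''(0)$''. This would require $g(t^\ast)\leq -\alpha t^\ast+\tfrac{1}{2}g''(0)(t^\ast)^2$, i.e.\ that $g$ lies \emph{below} its second-order Taylor polynomial at $0$. You provide no reason for this, and indeed the natural structure works against you: the function $g(t)=\log\det\widetilde\eta(e^{-tD})$ is \emph{convex} (write $A(t)=\sum_i e^{-t\nu_i}\widetilde\eta(P_i)$ and apply the Kadison--Schwarz inequality to the unital positive map $f\mapsto\sum_i f(i)\,e^{-t\nu_i}A(t)^{-1/2}\widetilde\eta(P_i)A(t)^{-1/2}$ from a commutative algebra). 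Convexity gives $g(t)\geq -\alpha t$, a lower bound, not the upper bound you need; and a convex function need not lie below its second-order Taylor polynomial at any point.

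Your ``Principal obstacle'' paragraph acknowledges this, but the proposed AM--GM fix is not carried out. If one does carry it out, there are two issues you gloss over. First, the bound $\log\det A\leq m\log\tr_m A$ introduces a factor $m$; you must use $\log(1+x)\leq x$ to cancel it and obtain $g(t)\leq\sum_i[(1+\nu_i)e^{-t\nu_i}-1]$, which you do not spell out. Second, and more seriously, to pass from, say, $g(1)\leq -\tfrac{\alpha}{2}+\tfrac{1}{3}\sum_i|\nu_i|^3\leq -\tfrac{\alpha}{2}+\tfrac{1}{3}\alpha^{3/2}$ to $\alpha\leq 6\delta$, you implicitly need an a~priori bound such as $\alpha\leq 1$; without it the inequality $\tfrac{\alpha}{2}-\tfrac{1}{3}\alpha^{3/2}\leq\delta$ does not exclude large $\alpha$. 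The only immediate a~priori information is $\nu_i\geq -1$ (from $\widetilde\eta^\ast(\1_m)\geq 0$) and $\sum_i\nu_i=0$, which does not bound the positive $\nu_i$'s. Closing this requires either a bootstrapping argument or a more refined choice of test direction, neither of which you supply. As written, the proof is a plausible outline but not complete.
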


\section{Proof of the formula for the FK-determinant of a matrix-valued semicircular element}

In this section, we present the proof of \Cref{thm:1.2}. The rough idea is to change arbitrary matrix-valued semicircular elements to doubly stochastic ones, namely matrix-valued semicircular elements for which the associated covariance maps are doubly stochastic. This is done either exactly (in the indecomposable case treated in \Cref{subsec:proof_thm:1.2_indecomposable}) or approximately (in the general case, discussed in \Cref{subsec:proof_thm:1.2_lower_bound} and \Cref{subsec:proof_thm:1.2_upper_bound}).
The special role of doubly stochastic matrix-valued semicircular elements is explained in \Cref{subsec:proof_thm:1.2_doubly_stochastic}.

\begin{remark}\label{rem:vanishing_capacity}
Some parts of this strategy to prove \Cref{thm:1.2} only work for those matrix-valued semicircular elements $S=a_1 \otimes s_1 + \dots + a_n \otimes s_n$, for which the associated covariance map $\eta$ satisfies $\capa(\eta) > 0$. This, however, is not a restriction: If $\capa(\eta)=0$, then the matrix $A = a_1 \otimes \fx_1 + \dots + a_n \otimes \fx_n$ in $M_m(\C\langle \fx_1,\dots,\fx_n\rangle)$ fails to be invertible over the free field $\C\plangle \fx_1,\dots,\fx_n\prangle$. Thus, \cite[Theorem 1.1]{MSY2023} tells us that $S$ and consequently $|S|$ have non-trivial kernels. This means that the distribution of $|S|$ has an atom at $0$, so that $\Delta(S)=0$ by \Cref{rem:FK_definition} \ref{it:FK_definition_kernel}. Thus, the formula asserted in \Cref{thm:1.2} is true in the case $\capa(\eta)=0$. 
\end{remark}

\subsection{Doubly stochastic matrix-valued semicircular elements}\label{subsec:proof_thm:1.2_doubly_stochastic}

Selfadjoint doubly stochastic matrix-valued semicircular elements are special, because one knows (see Theorem 3.3 in \cite{nica2002operator}, or also the discussion in Section 9.4 of \cite{MS17}) that their scalar-valued distribution is just an ordinary semicircle, whose Fuglede-Kadison determinant is given by \Cref{prop:FK_determinant_properties} \ref{it:FK_determinant_doubly_stochastic}.

\begin{proposition}\label{prop:doubly_stochastic_semicircular}
Let $S = a_1 \otimes s_1 + \ldots + a_n \otimes s_n$ be a matrix-valued semicircular
element, with selfadjoint $a_i=a_i^*\in M_m(\C)$ and corresponding covariance map
$$\eta:\ M_m(\C) \to M_m(\C), \quad b\mapsto 
\eta(b) = \sum^n_{i=1} a_i b a_i.$$
Note that in this case $S$ is selfadjoint and $\eta^*=\eta$.

If $\eta$ is doubly stochastic, that is, if $\eta(\1_m)=\1_m$, then the distribution of $S$ is the standard semicircle distribution and thus its Fuglede-Kadison determinant is given by $\Delta(S)=e^{-\frac{1}{2}}$.
\end{proposition}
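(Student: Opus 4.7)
The strategy is to show that, under the hypothesis $\eta(\1_m)=\1_m$, the scalar-valued distribution $\mu_S$ of $S$ with respect to $\tr_m\otimes\tau$ coincides with the standard semicircle distribution $\mu_s$ supported on $[-2,2]$. Once this is established, the assertion $\Delta(S)=e^{-1/2}$ is immediate from part (vii) of \Cref{prop:FK_determinant_properties}, because for a selfadjoint operator the Fuglede-Kadison determinant is determined by its distribution alone.

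To identify $\mu_S$, I would invoke the operator-valued Cauchy transform characterisation of matrix-valued semicircular elements. Let
$$G_S(z):=(\id\otimes\tau)\bigl[(z-S)^{-1}\bigr]\in M_m(\C),\qquad z\in\C^+,$$
with the scalar $z$ read as $z\cdot 1_{M_m(\M)}$. It is classical in operator-valued free probability (see Theorem 3.3 in \cite{nica2002operator} or Section 9.4 of \cite{MS17}) that $G_S$ is the unique analytic map $\C^+\to M_m(\C)$ with negative-definite imaginary part satisfying the matrix-valued fixed-point equation
$$G_S(z)=\bigl(z\1_m-\eta(G_S(z))\bigr)^{-1}.$$
I would then plug in the scalar ansatz $G_S(z)=g(z)\1_m$: the hypothesis $\eta(\1_m)=\1_m$ yields $\eta(g(z)\1_m)=g(z)\1_m$, so the matrix equation collapses to the scalar equation $g(z)(z-g(z))=1$, which is precisely the equation characterising the Cauchy transform $g_s$ of the standard semicircle. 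Uniqueness then forces $G_S(z)=g_s(z)\1_m$, so the scalar Cauchy transform of $S$ is $\tr_m(G_S(z))=g_s(z)$, whence $\mu_S=\mu_s$, as required.

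There is essentially no genuine obstacle, since both the fixed-point equation and its uniqueness belong to the standard toolbox of operator-valued free probability and can simply be quoted. A self-contained alternative proceeds via moments: the $M_m(\C)$-valued semicircle formula gives
$$(\id\otimes\tau)(S^{2k})=\sum_{\pi\in\NC_2(2k)}\mathcal{E}_\pi(\1_m),$$
where $\mathcal{E}_\pi$ denotes the nested composition of $\eta$ dictated by the non-crossing pair partition $\pi$. Applying the hypothesis $\eta(\1_m)=\1_m$ inductively (peeling off innermost pairs) shows that every $\mathcal{E}_\pi(\1_m)$ equals $\1_m$, so $(\tr_m\otimes\tau)(S^{2k})=|\NC_2(2k)|$, which is the $2k$-th moment of $\mu_s$, while odd moments vanish trivially. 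Either route completes the proof.
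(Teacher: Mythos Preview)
Your proposal is correct and matches the paper's treatment: the paper does not give an independent proof of this proposition but simply quotes the same references you invoke (Theorem 3.3 in \cite{nica2002operator} and Section 9.4 of \cite{MS17}) for the fact that a doubly stochastic matrix-valued semicircular has scalar semicircular distribution, and then applies \Cref{prop:FK_determinant_properties} \ref{it:FK_determinant_doubly_stochastic}. Your moment argument via $\NC_2(2k)$ is in fact precisely the computation the paper carries out in detail in the proof of the subsequent \Cref{prop:approximate_doubly_stochastic_semicircular}, specialised to $\eta(\1_m)=\1_m$.
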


Note that for doubly stochastic matrix-valued semicircular elements which are not selfadjoint, \Cref{lem:opval_semicricular_operations} \ref{it:opval_semicricular_operations:hermitization} will provide us a general method for transforming them in such way that \Cref{prop:doubly_stochastic_semicircular} can be applied.

We will also need the following approximate version of  \Cref{prop:doubly_stochastic_semicircular}.

\begin{proposition}\label{prop:approximate_doubly_stochastic_semicircular}
Let $S = a_1 \otimes s_1 + \ldots + a_n \otimes s_n$ be a selfadjoint matrix-valued semicircular element with the associated self-dual covariance map $\eta: M_m(\C) \to M_m(\C), b\mapsto \sum^n_{i=1} a_i b a_i$. Then, for all $k \in \N$, we have
$$\left|(\tr_m \otimes \tau)(S^{2k}) - \int^2_{-2} t^{2k}\, d\mu_s(t) \right| \leq C_k \bigg(\sum^{k-1}_{j=0} \|\eta\|^j\bigg) \|\eta(\1_m) - \1_m\|,$$
where $C_k$ denotes the $k$-th Catalan number and $\mu_s$ is the distribution of a standard semicircular element $s$ (see \Cref{prop:FK_determinant_properties} \ref{it:FK_determinant_doubly_stochastic}).
\end{proposition}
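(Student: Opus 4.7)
The plan is to expand the left-hand side using the operator-valued moment formula for a semicircular element. Since $S$ is an $\eta$-valued semicircular over $M_m(\C)$ with respect to the conditional expectation $E := \id_{M_m} \otimes \tau$, the standard formula (see Section~9.4 of \cite{MS17}) yields
\[
E[S^{2k}] = \sum_{\pi \in \NC_2(2k)} \eta_\pi[\1_m] \in M_m(\C),
\]
where $\eta_\pi[\1_m]$ is defined recursively as follows: if $\pi$ pairs $1$ with $2j$ and we write $\sigma_1 \in \NC_2(\{2,\dots,2j-1\})$ and $\sigma_2 \in \NC_2(\{2j+1,\dots,2k\})$ for the restrictions of $\pi$ (both again non-crossing by the non-crossing property of $\pi$), then
\[
\eta_\pi[\1_m] = \eta\bigl(\eta_{\sigma_1}[\1_m]\bigr) \cdot \eta_{\sigma_2}[\1_m],
\]
with base case $\eta_\varnothing[\1_m] = \1_m$. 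Applying $\tr_m$ and using that $\int_{-2}^2 t^{2k}\,d\mu_s(t) = C_k = |\NC_2(2k)|$, the quantity to estimate becomes
\[
\Bigl|(\tr_m\otimes\tau)(S^{2k}) - C_k\Bigr| = \Bigl|\sum_{\pi\in\NC_2(2k)} \tr_m\bigl(\eta_\pi[\1_m] - \1_m\bigr)\Bigr|.
\]

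The core step is to prove the pointwise operator-norm estimate $\|\eta_\pi[\1_m] - \1_m\| \leq \|\delta\| \sum_{j=0}^{k-1} \|\eta\|^j$ for every $\pi \in \NC_2(2k)$, where $\delta := \eta(\1_m) - \1_m$. I would prove this by induction on $k$, carrying along the auxiliary bound $\|\eta_\pi[\1_m]\| \leq \|\eta\|^k$. The recursion $\eta_\pi[\1_m] = \eta(X)\, Y$ with $X = \eta_{\sigma_1}[\1_m]$ and $Y = \eta_{\sigma_2}[\1_m]$ immediately yields the auxiliary bound by submultiplicativity. For the main bound, I would split
\[
\eta(X)\, Y - \1_m = \bigl(\eta(X) - \1_m\bigr)\, Y + (Y - \1_m)
\]
and rewrite $\eta(X) - \1_m = \eta(X - \1_m) + \delta$; feeding in the inductive estimates for $X$ and $Y$ together with the auxiliary bound $\|Y\|\leq \|\eta\|^{k-j}$ produces two geometric partial sums $\|\delta\|\sum_{i=0}^{j-1}\|\eta\|^{i+k-j}$ and $\|\delta\|\sum_{i=0}^{k-j-1}\|\eta\|^i$, which reassemble telescopically into the desired $\|\delta\|\sum_{i=0}^{k-1}\|\eta\|^i$. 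Summing this pointwise bound over the $C_k$ non-crossing pair partitions and using $|\tr_m(A)| \leq \|A\|$ gives the proposition.

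The main obstacle is the index bookkeeping inside the inductive step: the two partial sums obtained from the two summands in the triangle inequality must combine to \emph{exactly} $\sum_{i=0}^{k-1}\|\eta\|^i$ with no loss, and this hinges on the $\|\eta\|^{k-j}$ factor coming from the auxiliary norm bound being sharp, which is why both bounds must be proved in tandem. The edge cases $j=1$ (then $\sigma_1 = \varnothing$, so $X = \1_m$) and $j=k$ (then $\sigma_2 = \varnothing$, so $Y = \1_m$) fall out automatically with the empty-sum convention, so no separate argument is needed. Once the pointwise estimate is in hand, everything else reduces to routine applications of the triangle inequality and the identity $C_k = |\NC_2(2k)|$.
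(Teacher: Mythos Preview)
Your proof is correct and reaches the same bound as the paper's, but the inductive step is organized differently. You decompose $\pi\in\NC_2(2k)$ via the \emph{outermost} block (the one containing position $1$), which splits $\pi$ into two smaller pieces $\sigma_1,\sigma_2$ and forces you to carry along the auxiliary estimate $\|\eta_\pi[\1_m]\|\leq\|\eta\|^k$ and then reassemble two partial geometric sums into $\sum_{i=0}^{k-1}\|\eta\|^i$. The paper instead removes an \emph{innermost} block $(p,p+1)$, reducing to a single $\pi'\in\NC_2(2(k-1))$ and obtaining the one-step recursion $\|\kappa_\pi-\1_m\|\leq\|\eta\|^{k-1}\|\delta\|+\|\kappa_{\pi'}-\1_m\|$, which telescopes immediately. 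The price the paper pays is that removing an interior block replaces two adjacent $\1_m$'s by $\eta(\1_m)$, so one needs the map $\eta_\pi$ in its general multilinear form $\eta_\pi(b_0\otimes\cdots\otimes b_{2k})$ together with the bound $\|\eta_\pi(b_0\otimes\cdots\otimes b_{2k})\|\leq\|\eta\|^k\prod_j\|b_j\|$, rather than just the value at $\1_m^{\otimes(2k+1)}$. So each approach trades one piece of auxiliary machinery for another; neither is materially shorter, and both yield exactly the claimed constant.
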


Let us point out that the assertion of \Cref{prop:approximate_doubly_stochastic_semicircular} extends trivially to the case $k=0$. Moreover, we note that the odd moments of $S$ satisfy $(\tr_m \otimes \tau)(S^{2k+1}) = 0 = \int^2_{-2} t^{2k+1}\, d\mu_s(t)$ for all $k \in \N_0$. Thus, \Cref{prop:approximate_doubly_stochastic_semicircular} shows that $S$ converges in distribution to $s$ as $\eta(\1_m)$ converges to $\1_m$.

\begin{proof}[Proof of \Cref{prop:approximate_doubly_stochastic_semicircular}]
Recall that $S$ is an operator-valued semicircular element in the operator-valued probability space $(M_m(\M),\E,M_m(\C))$ with the conditional expectation $\E := \id_{M_m(\C)} \otimes \tau$. Therefore, the moment-cumulant formula (see, for instance, Definition 9.7 in \cite{MS17}) for $S$ tells us that
\begin{equation}\label{eq:moment_cululant_formula}
b_0 \E[S b_1 S  \cdots b_{2k-1} S] b_{2k} = \sum_{\pi \in \NC_2(2k)} \eta_\pi(b_0 \otimes b_1 \otimes \dots \otimes b_{2k-1} \otimes b_{2k}),
\end{equation}
where $\NC_2(2k)$ denotes the set of all non-crossing pairings of $\{1,\dots,2k\}$ and the linear map $\eta_\pi: M_m(\C)^{\otimes (2k+1)} \to M_m(\C)$ is defined by applying $\eta$ to its arguments according to the block structure of $\pi \in \NC_2(2k)$. Note that
\begin{equation}\label{eq:eta_pi-bound}
\|\eta_\pi(b_0 \otimes \dots \otimes b_{2k})\| \leq \|\eta\|^k \|b_0\| \cdots \|b_{2k}\| \quad\text{for all $b_0,\dots,b_{2k} \in M_m(\C)$}.
\end{equation}

For $\pi \in \NC_2(2k)$, we put $\kappa_\pi := \eta_\pi(\1_m^{\otimes (2k+1)})$. We claim that
\begin{equation}\label{eq:kappa_pi-bound}
\|\kappa_\pi - \1_m\| \leq \bigg(\sum^{k-1}_{j=0} \|\eta\|^j\bigg) \|\eta(\1_m) - \1_m\| \qquad\text{for all $\pi \in \NC_2(2k)$}.
\end{equation}
For $k=1$, the assertion of \eqref{eq:kappa_pi-bound} is true since $\kappa_{(12)} = \eta(\1_m)$. Now, consider the case $k\geq 2$ and take any $\pi \in \NC_2(2k)$. Then $\pi$ contains a block of the form $(p,p+1)$ for some $1 \leq p \leq 2k-1$. Let $\pi' \in \NC_2(2(k-1))$ be obtained by removing the block $(p,p+1)$ from $\pi$. It follows that
\begin{align*}
\kappa_\pi = \eta_\pi(\1_m^{\otimes (2k+1)}) &= \eta_{\pi'}(\1_m^{\otimes (p-1)} \otimes \eta(\1_m) \otimes \1_m^{\otimes (2k-1-p)})\\
&= \eta_{\pi'}(\1_m^{\otimes (p-1)} \otimes (\eta(\1_m)-\1_m) \otimes \1_m^{\otimes (2k-1-p)}) + \kappa_{\pi'}.
\end{align*}
By using the bound \eqref{eq:eta_pi-bound}, we may deduce from the latter that
$$\|\kappa_\pi-\1_m\| \leq \|\eta\|^{k-1} \|\eta(\1_m)-\1_m\| + \|\kappa_{\pi'}-\1_m\|.$$
Thus, the validity of \eqref{eq:kappa_pi-bound} follows by mathematical induction.

Because $\int^2_{-2} t^{2k}\, d\mu_s(t) = \# \NC_2(2k)$, the formula \eqref{eq:moment_cululant_formula} yields that
$$\left\|\E[S^{2k}] - \int^2_{-2} t^{2k}\, d\mu_s(t)\, \1_m\right\| \leq \sum_{\pi \in \NC_2(2k)} \| \kappa_\pi - \1_m\|.$$
By using the bound \eqref{eq:kappa_pi-bound} as well as the facts that $C_k = \#\NC_2(2k)$ and $|\tr_m(b)| \leq \|b\|$ for all $b\in M_m(\C)$, we arrive at the asserted bound.
\end{proof}

\subsection{The indecomposable case}\label{subsec:proof_thm:1.2_indecomposable}

The goal of this subsection is to verify the assertion of \Cref{thm:1.2} in the case of completely positive linear maps which are indecomposable in the sense of \Cref{def:positive_linear_maps} \ref{it:indecomposable}. The precise result reads as follows.

\begin{proposition}\label{prop:capacity_formula_indecomposable}
Consider the matrix-valued semicircular element $S = a_1 \otimes s_1 + \ldots + a_n \otimes s_n$ with the associated pair $(\eta,\eta^\ast)$ of dual covariance maps $\eta,\eta^\ast: M_m(\C) \to M_m(\C)$. If $\eta$ is indecomposable, then
$$\Delta(S) = \capa(\eta)^{\frac{1}{2m}} e^{-\frac{1}{2}}.$$
\end{proposition}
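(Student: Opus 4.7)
My plan is to reduce the general indecomposable case to the doubly stochastic case by means of the operator scaling provided by \Cref{prop:indecomposable_minimizer}. Indecomposability of $\eta$ yields a unique positive definite $c \in M_m(\C)$ with $\det(c)=1$ and $\det(\eta(c)) = \capa(\eta)$ such that $\tilde{\eta} := \eta_{\eta(c)^{-1/2},\,c^{1/2}}$ is doubly stochastic. Setting $u := \eta(c)^{-1/2}$ and $v := c^{1/2}$, I would introduce the (generally non-selfadjoint) matrix-valued semicircular element
$$\tilde{S} := (u \otimes \1) \, S \, (v \otimes \1) = \sum_{i=1}^n (u a_i v) \otimes s_i,$$
whose covariance map is precisely $\tilde{\eta}$, and then compare $\Delta(S)$ with $\Delta(\tilde S)$.

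For the first comparison, multiplicativity (\Cref{prop:FK_determinant_properties} \ref{it:FK_determinant_multiplicative}) together with $\Delta(b\otimes \1) = \Delta(b)$ (\ref{it:FK_determinant_tensor}) and $\Delta(b) = \det(|b|)^{1/m}$ for $b \in M_m(\C)$ (\ref{it:FK_determinant_matrices}) yields
$$\Delta(\tilde{S}) = \Delta(u)\, \Delta(S)\, \Delta(v) = \det(\eta(c))^{-\frac{1}{2m}} \Delta(S)\, \det(c)^{\frac{1}{2m}} = \capa(\eta)^{-\frac{1}{2m}} \Delta(S).$$
Thus the proposition reduces to showing $\Delta(\tilde{S}) = e^{-1/2}$.

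To obtain this, I would hermitize and pass to the selfadjoint matrix-valued semicircular element of size $2m$,
$$\tilde T := \begin{bmatrix} 0 & \tilde S \\ \tilde S^* & 0 \end{bmatrix} = \sum_{i=1}^n \begin{bmatrix} 0 & u a_i v \\ v^* a_i^* u^* & 0 \end{bmatrix} \otimes s_i.$$
A short block computation shows that its covariance map $\eta_{\tilde T}$ is self-dual and that $\eta_{\tilde T}(\1_{2m})$ is block diagonal with diagonal entries $\tilde\eta(\1_m)$ and $\tilde\eta^*(\1_m)$; both equal $\1_m$ because $\tilde\eta$ is doubly stochastic, so $\eta_{\tilde T}$ is itself doubly stochastic. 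Hence \Cref{prop:doubly_stochastic_semicircular} gives $\Delta(\tilde T) = e^{-1/2}$. On the other hand, the off-diagonal block formula in \Cref{prop:FK_determinant_properties} \ref{it:FK_determinant_matrix} combined with $\Delta(\tilde S^*) = \Delta(\tilde S)$ from \ref{it:FK_determinant_adjoint} gives $\Delta(\tilde T) = \Delta(\tilde S)$, so $\Delta(\tilde S) = e^{-1/2}$ and the proposition follows.

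The main subtlety I anticipate is precisely that the Gurvits operator scaling is two-sided with \emph{different} matrices on each side, so it destroys selfadjointness of $S$ even when $S$ is selfadjoint. One therefore cannot apply \Cref{prop:doubly_stochastic_semicircular} to $\tilde S$ directly, and the $2m \times 2m$ hermitization detour is essential to recover a selfadjoint element whose covariance is still doubly stochastic. Once that detour is in place, the remaining steps are routine bookkeeping with the properties collected in \Cref{prop:FK_determinant_properties}.
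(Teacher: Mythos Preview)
Your proposal is correct and follows essentially the same route as the paper: operator scaling via \Cref{prop:indecomposable_minimizer} to reach a doubly stochastic $\tilde\eta$, then hermitization to a selfadjoint $2m\times 2m$ semicircular element whose covariance is doubly stochastic so that \Cref{prop:doubly_stochastic_semicircular} applies, and finally the block identity \Cref{prop:FK_determinant_properties} \ref{it:FK_determinant_matrix} to transfer $\Delta(\tilde T)=e^{-1/2}$ back to $\Delta(\tilde S)$. The only difference is cosmetic: the paper packages the scaling and hermitization computations into \Cref{lem:opval_semicricular_operations} \ref{it:opval_semicricular_operations:operator_scaling} and \ref{it:opval_semicricular_operations:hermitization}, whereas you carry them out inline.
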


To begin with, we list in the following lemma some useful properties of the Fuglede-Kadison determinant which we will need not only for the proof of \Cref{prop:capacity_formula_indecomposable} but also for our subsequent considerations.

\begin{lemma}\label{lem:opval_semicricular_operations}
Consider the matrix-valued semicircular element $S = a_1 \otimes s_1 + \ldots + a_n \otimes s_n$ with the associated pair $(\eta,\eta^\ast)$ of dual covariance maps $\eta,\eta^\ast: M_m(\C) \to M_m(\C)$ which are given by
$$\eta(b) = \sum^n_{i=1} a_i b a_i^\ast \qquad\text{and}\qquad \eta^\ast(b) = \sum^n_{i=1} a_i^\ast b a_i.$$
\begin{enumerate}
 \item\label{it:opval_semicricular_operations:operator_scaling} For $c_1,c_2 \in M_m(\C)$, consider the matrix-valued semicircular element
 $$\widetilde{S} := \sum^n_{i=1} (c_1 a_i c_2) \otimes s_i$$
 Then, $(\eta_{c_1,c_2},(\eta^\ast)_{c_2,c_1})$ is the pair of dual covariance maps associated with $\widetilde{S}$ and we have that
$$\Delta(\widetilde{S}) = |\det(c_1)|^{\frac{1}{m}} |\det(c_2)|^{\frac{1}{m}} \Delta(S).$$
\item\label{it:opval_semicricular_operations:hermitization} The $M_{2m}(\C)$-valued semicircular element defined by
$$S^\sym := \sum^n_{i=1} \begin{bmatrix} 0 & a_i\\ a_i^\ast & 0 \end{bmatrix} \otimes s_i$$
has the self-dual covariance map $\eta^\sym: M_{2m}(\C) \to M_{2m}(\C)$ given by
$$\eta^\sym\left(\begin{bmatrix} b_{11} & b_{12}\\ b_{21} & b_{22} \end{bmatrix}\right) = \begin{bmatrix} \eta(b_{22}) & \rho(b_{21})\\ \rho^\ast(b_{12}) & \eta^\ast(b_{11}) \end{bmatrix}$$
with $\rho(b) := \sum^n_{i=1} a_i b a_i$. We call $S^\sym$ the \emph{hermitization} of $S$. Then
$$\Delta(S^\sym) = \Delta(S).$$
\end{enumerate}
\end{lemma}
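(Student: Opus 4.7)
My approach is to treat both parts as formal consequences of the properties of $\Delta$ collected in \Cref{prop:FK_determinant_properties}, together with direct expansion for the covariance-map assertions.

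For part \ref{it:opval_semicricular_operations:operator_scaling}, the key observation is the factorization
$$\widetilde{S} = (c_1 \otimes \1)\, S\, (c_2 \otimes \1) \quad\text{in } M_m(\M),$$
obtained by distributing $(c_1 \otimes \1)(a_i \otimes s_i)(c_2 \otimes \1) = (c_1 a_i c_2) \otimes s_i$ and summing over $i$. Multiplicativity (\Cref{prop:FK_determinant_properties} \ref{it:FK_determinant_multiplicative}) then gives
$$\Delta(\widetilde{S}) = \Delta(c_1 \otimes \1)\, \Delta(S)\, \Delta(c_2 \otimes \1).$$
The tensor formula \Cref{prop:FK_determinant_properties} \ref{it:FK_determinant_tensor} reduces the outer factors to $\Delta(c_1)$ and $\Delta(c_2)$, and the finite-dimensional formula \Cref{prop:FK_determinant_properties} \ref{it:FK_determinant_matrices} evaluates these as $\Delta(c_j) = \det(c_j c_j^\ast)^{1/(2m)} = |\det(c_j)|^{1/m}$. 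Combining yields the claimed identity. The covariance identities are read off by directly expanding $\sum_i (c_1 a_i c_2)\, b\, (c_1 a_i c_2)^\ast$ and the analogous expression for $\widetilde{S}^\ast$, and matching against the definition of the operator scalings $\eta_{c_1,c_2}$ and $(\eta^\ast)_{c_2,c_1}$.

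For part \ref{it:opval_semicricular_operations:hermitization}, the plan is to re-express $S^\sym$ via the canonical identification $M_{2m}(\M) \cong M_2(M_m(\M))$, under which the trace factorizes as $\tr_{2m} \otimes \tau = \tr_2 \otimes (\tr_m \otimes \tau)$ so that Fuglede--Kadison determinants transport without change. Using also that $s_i^\ast = s_i$, one obtains
$$S^\sym = \begin{bmatrix} 0 & S \\ S^\ast & 0 \end{bmatrix}.$$
Applying \Cref{prop:FK_determinant_properties} \ref{it:FK_determinant_matrix} with $T_1 = S$ and $T_2 = S^\ast$ yields $\Delta(S^\sym) = \Delta(S)^{1/2}\, \Delta(S^\ast)^{1/2}$, and \Cref{prop:FK_determinant_properties} \ref{it:FK_determinant_adjoint} then collapses this to $\Delta(S)$. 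The formula for $\eta^\sym$ is a direct block-matrix computation of $\sum_i A_i B A_i$ with the selfadjoint matrix $A_i = \begin{bmatrix} 0 & a_i \\ a_i^\ast & 0 \end{bmatrix}$, whose four entries are exactly $\eta(b_{22})$, $\rho(b_{21})$, $\rho^\ast(b_{12})$, and $\eta^\ast(b_{11})$.

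I do not anticipate any substantive obstacle here: both parts amount to bookkeeping on top of \Cref{prop:FK_determinant_properties}. The only items requiring a bit of care are the correct placement of $c_j$ versus $c_j^\ast$ in the covariance expressions of part \ref{it:opval_semicricular_operations:operator_scaling}, and the compatibility of tensor factors and traces under the identification $M_2(\C) \otimes M_m(\C) \otimes \M \cong M_2(\C) \otimes (M_m(\C) \otimes \M)$ used in part \ref{it:opval_semicricular_operations:hermitization}.
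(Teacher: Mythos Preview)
Your proposal is correct and follows essentially the same route as the paper: both parts use the factorization $\widetilde{S} = (c_1\otimes\1)\,S\,(c_2\otimes\1)$ together with \Cref{prop:FK_determinant_properties} \ref{it:FK_determinant_multiplicative}, \ref{it:FK_determinant_tensor}, \ref{it:FK_determinant_matrices} for part \ref{it:opval_semicricular_operations:operator_scaling}, and the trace-preserving identification $M_{2m}(\M)\cong M_2(M_m(\M))$ together with \Cref{prop:FK_determinant_properties} \ref{it:FK_determinant_matrix}, \ref{it:FK_determinant_adjoint} for part \ref{it:opval_semicricular_operations:hermitization}. The covariance-map verifications are handled in the paper, as in your proposal, by direct expansion.
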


Before giving the proof of \Cref{lem:opval_semicricular_operations}, the following remark is in order.

\begin{remark}
The covariance map $\eta: M_m(\C) \to M_m(\C), b \mapsto \sum^n_{i=1} a_i b a_i^\ast$ associated with a matrix-valued semicircular element $S=\sum_{i=1}^n a_i\otimes s_i$ in $(M_m(\M),\tr_m \otimes \tau)$ satisfies $\eta(b) = \E[S b S^\ast]$ for all $b \in M_m(\C)$, where $\E: M_m(\M) \to M_m(\C)$ is the conditional expectation onto $M_m(\C) \subseteq M_m(\M)$ which is defined by $\E := \id_{M_m(\C)} \otimes \tau$. Accordingly, the dual $\eta^\ast: M_m(\C) \to M_m(\C), b \mapsto \sum^n_{i=1} a_i^\ast b a_i$ of $\eta$ satisfies $\eta^\ast(b) = \E[S^\ast b S]$ for all $b \in M_m(\C)$.    
\end{remark}

\begin{proof}[Proof of \Cref{lem:opval_semicricular_operations}]
\begin{enumerate}[wide]
 \item It is straightforward to check that the pair of dual covariance maps associated with $\widetilde{S}$ is given by $(\eta_{c_1,c_2},(\eta^\ast)_{c_2,c_1})$. By definition, we have $\widetilde{S} = (c_1 \otimes \1) S (c_2 \otimes \1)$. Hence, by \Cref{prop:FK_determinant_properties} \ref{it:FK_determinant_multiplicative},
 $$\Delta(\widetilde{S}) = \Delta(c_1 \otimes \1) \Delta(S) \Delta(c_2 \otimes \1).$$
 Thus, it remains to note that $\Delta(c\otimes \1) = \Delta(c) = |\det(c)|^{1/m}$ for all $c \in M_m(\C)$ by \Cref{prop:FK_determinant_properties} \ref{it:FK_determinant_tensor} and \Cref{prop:FK_determinant_properties} \ref{it:FK_determinant_matrices}.
 \item Again, it is easy to verify that the self-dual covariance map $\eta^\sym$ is associated with the selfadjoint matrix-valued semicircular element $S^\sym$. Under the natural trace-preserving identification $M_{2m}(\C) \otimes \M \cong M_2(M_m(\C) \otimes \M)$, $S^\sym$ corresponds to $\begin{bmatrix} 0 & S\\ S^\ast & 0 \end{bmatrix}$; in particular, the selfadjoint operators $S^\sym$ and $\begin{bmatrix} 0 & S\\ S^\ast & 0 \end{bmatrix}$ have the same distribution and hence $\Delta(S^\sym) = \Delta\left(\begin{bmatrix} 0 & S\\ S^\ast & 0 \end{bmatrix}\right)$. So
 $$\Delta\left(\begin{bmatrix} 0 & S\\ S^\ast & 0 \end{bmatrix}\right) = \Delta(S)^{\frac{1}{2}}\Delta(S^*)^{\frac{1}{2}}=\Delta(S)$$
 by \Cref{prop:FK_determinant_properties} \ref{it:FK_determinant_matrix} and \Cref{prop:FK_determinant_properties} \ref{it:FK_determinant_adjoint}. \qedhere
 \end{enumerate}
\end{proof}

Note the similarity between \Cref{lem:opval_semicricular_operations} \ref{it:opval_semicricular_operations:operator_scaling} and the formula describing the transformation behavior of capacities under operator scaling as given in Proposition 2.7 in \cite{GGOW19} (see also \cite{Gur2004}); this shows that \Cref{thm:1.2} is consistent with operator scaling.

\begin{proof}[Proof of \Cref{prop:capacity_formula_indecomposable}]
Since $\eta$ is indecomposable, there exists by \Cref{prop:indecomposable_minimizer} a positive definite matrix $c\in M_m(\C)$ with $\det(c)=1$ and $\det(\eta(c))=\capa(\eta)$; moreover, the operator scaling $\widetilde{\eta} := \eta_{\eta(c)^{-1/2},c^{1/2}}$ is doubly stochastic, that is, $\widetilde{\eta}(\1_m)=\1_m$ and $\widetilde{\eta}^\ast(\1_m)=\1_m$. From \Cref{lem:opval_semicricular_operations} \ref{it:opval_semicricular_operations:operator_scaling}, we get that $(\widetilde{\eta},\widetilde{\eta}^\ast)$ is the pair of dual covariance maps of the matrix-valued semicircular element $\widetilde{S} := \sum^n_{i=1} (\eta(c)^{-1/2} a_i c^{1/2}) \otimes s_i$ and that
$$\Delta(S) = \det(\eta(c))^{\frac{1}{2m}} \det(c)^{-\frac{1}{2m}} \Delta(\widetilde{S}) = \capa(\eta)^{\frac{1}{2m}} \Delta(\widetilde{S}).$$
From \Cref{lem:opval_semicricular_operations} \ref{it:opval_semicricular_operations:hermitization}, we learn that the covariance map $\widetilde{\eta}^\sym$ of the hermitization $\widetilde{S}^\sym$ of $\widetilde{S}$ satisfies $\widetilde{\eta}^\sym(\1_{2m})=\1_{2m}$ since $\widetilde{\eta}$ is doubly stochastic. Thus $\widetilde{\eta}^\sym$ is also doubly stochastic and we conclude, by \Cref{prop:doubly_stochastic_semicircular}, that the scalar-valued distribution of $\widetilde{S}^\sym$ is the standard semicircular distribution; hence
$$\Delta(\widetilde{S}) = \Delta(\widetilde{S}^\sym) = e^{-\frac{1}{2}}.$$
In summary, we obtain that $\Delta(S) = \capa(\eta)^{\frac{1}{2m}} e^{-\frac{1}{2}}$, as asserted.
\end{proof}

\subsection{The general case: proof of the lower bound}\label{subsec:proof_thm:1.2_lower_bound}

Consider the matrix-valued semicircular element $S = a_1 \otimes s_1 + \ldots + a_n \otimes s_n$ with the associated pair $(\eta,\eta^\ast)$ of dual covariance maps $\eta,\eta^\ast: M_m(\C) \to M_m(\C)$. Without loss of generality (see \Cref{rem:vanishing_capacity}) we may assume that $\capa(\eta) > 0$. We want to prove that
\begin{equation}\label{eq:thm:1.2_lower_bound}
\Delta(S) \geq \capa(\eta)^{\frac{1}{2m}} e^{-\frac{1}{2}}.
\end{equation}

The strategy is to consider small perturbations of $S$ for which the associated covariance maps are indecomposable in order to reduce the assertion to \Cref{prop:capacity_formula_indecomposable} with the help of \Cref{prop:FK_determinant_lower_semicontinuity}.

To this end, let $S^\flat$ be an $M_m(\C)$-valued semicircular element in $M_m(\C) \otimes \M$ whose (self-dual) covariance map is the \emph{completely depolarizing channel}
$$\eta^\flat:\ M_m(\C) \to M_m(\C), \quad b\mapsto \eta^\flat(b) = \tr_m(b) \1_m$$
and such that $S$ and $S^\flat$ are free with amalgamation over $M_m(\C)$. Then, $S_t := S + \sqrt{t} S^\flat$, for $t\geq 0$, is a matrix-valued semicircular element with associated pair of dual covariance maps $(\eta_t,\eta_t^\ast)$, where $\eta_t: M_m(\C) \to M_m(\C)$ is given by $\eta_t(b) = \eta(b) + t \eta^\flat(b)$. 

Note that $\eta_t$ is strictly positive for all $t>0$ since $\eta_t(b) \geq t \tr_m(b) \1_m$ for all $b \geq 0$. (In fact, this property is the reason for denoting the completely depolarizing channel by $\eta^\flat$ with regard to \Cref{rem:positive_linear_maps} \ref{it:semi-flat_vs_strictly_positive}.) Hence, as explained in \Cref{rem:positive_linear_maps} \ref{it:strict_positivity_implies_indecomposability}, $\eta_t$ is indecomposable for all $t>0$. By \Cref{prop:capacity_formula_indecomposable}, we thus get that
$$\Delta(S_t) = \capa(\eta_t)^{\frac{1}{2m}} e^{-\frac{1}{2}}.$$

Next, we observe that $\eta_t \geq \eta$ which implies $\capa(\eta_t) \geq \capa(\eta)$ by \Cref{prop:capacity_monotonicity}. Consequently, we have by \Cref{prop:FK_determinant_properties} \ref{it:FK_determinant_multiplicative} and \Cref{lem:opval_semicricular_operations} \ref{it:opval_semicricular_operations:hermitization} that
$$\Delta((S_t^\sym)^2) = \Delta(S_t^\sym)^2 = \Delta(S_t)^2 \geq \capa(\eta)^{\frac{1}{m}} e^{-1}.$$

On the other hand, we have that $\|\eta_t^\sym-\eta^\sym\| = t$ which means that $\lim_{t\searrow 0} \|\eta_t^\sym-\eta^\sym\| = 0$; hence, we have $\lim_{t\searrow 0} L(\mu_{S_t^\sym},\mu_{S^\sym}) = 0$ by \cite[Theorem 3.5]{BM2023} (see also \cite[Theorem 1.1]{Mai2022}) with respect to the L\'evy distance $L$. In particular, $(S_t^\sym)^2$ converges in distribution to $(S^\sym)^2$ as $t \searrow 0$.

Moreover, we have that 
$$\|S_t^\sym\| \leq 2\|\eta_t^\sym\|^{1/2}\leq 2(\Vert \eta^\sym\Vert+t)^{1/2}\leq 2(\Vert \eta^\sym\Vert+1)^{1/2},$$
if we restrict to $0 < t\leq 1$; for the first inequality see, for example, the discussion after Eq. (9.5) in \cite{MS17}.

In summary, $((S_t^\sym)^2)_{t\in (0,1]}$ and $(S_0^\sym)^2 = (S^\sym)^2$ satisfy all conditions required in \Cref{prop:FK_determinant_lower_semicontinuity}. This gives
$$\Delta((S^\sym)^2) \geq \limsup_{t\searrow 0} \Delta((S_t^\sym)^2) \geq \capa(\eta)^{\frac{1}{m}} e^{-1}.$$
With the help of \Cref{lem:opval_semicricular_operations} \ref{it:opval_semicricular_operations:hermitization} and \Cref{prop:FK_determinant_properties} \ref{it:FK_determinant_multiplicative}, the latter implies that $\Delta(S) = \Delta(S^\sym) = \Delta((S^\sym)^2)^{\frac{1}{2}} \geq \capa(\eta)^{\frac{1}{2m}} e^{-\frac{1}{2}}$, as asserted in \eqref{eq:thm:1.2_lower_bound}.

\subsection{The general case: proof of the upper bound}\label{subsec:proof_thm:1.2_upper_bound}

Consider again a matrix-valued semicircular element $S = a_1 \otimes s_1 + \ldots + a_n \otimes s_n$ with the associated pair $(\eta,\eta^\ast)$ of dual covariance maps $\eta,\eta^\ast: M_m(\C) \to M_m(\C)$. Without loss of generality (see \Cref{rem:vanishing_capacity}) we may assume that $\capa(\eta) > 0$. We want to prove that
\begin{equation}\label{eq:thm:1.2_upper_bound}
\Delta(S) \leq \capa(\eta)^{\frac{1}{2m}} e^{-\frac{1}{2}}.
\end{equation}

Let $\delta\in (0,\frac{1}{6}]$ and take any positive definite matrix $c_\delta\in M_m(\C)$ such that
$$\capa(\eta) \geq e^{-\delta} \cdot \frac{\det(\eta(c_\delta))}{\det(c_\delta)}.$$
For the associated operator scaling $\widetilde{\eta}_\delta := \eta_{\eta(c_\delta)^{-1/2},c_\delta^{1/2}}$, we have $\ds(\widetilde{\eta}_\delta) \leq 6\delta$ according to \Cref{prop:approximate_minimizer}. From \Cref{lem:opval_semicricular_operations} \ref{it:opval_semicricular_operations:operator_scaling}, we get that $\widetilde{S}_\delta := \sum^n_{i=1} (\eta(c)^{-1/2} a_i c^{1/2}) \otimes s_i$ is a matrix-valued semicircular element the associated pair of covariance maps of which is given by $(\widetilde{\eta}_\delta,\widetilde{\eta}_\delta^\ast)$ and that
$$\Delta(S) = \left(\frac{\det(\eta(c_\delta))}{\det(c_\delta)}\right)^{\frac{1}{2m}} \Delta(\widetilde{S}_\delta) \leq \capa(\eta)^{\frac{1}{2m}} e^{\frac{\delta}{2m}} \Delta(\widetilde{S}_\delta).$$
With \Cref{lem:opval_semicricular_operations} \ref{it:opval_semicricular_operations:hermitization}, we infer for the hermitization $\widetilde{S}_\delta^\sym$ of $\widetilde{S}_\delta$ that
\begin{equation}\label{eq:thm:1.2_upper_bound-1}
\Delta(S) \leq \capa(\eta)^{\frac{1}{2m}} e^{\frac{\delta}{2m}} \Delta(\widetilde{S}_\delta^\sym).
\end{equation}

Moreover, \Cref{lem:opval_semicricular_operations} \ref{it:opval_semicricular_operations:hermitization} tells us that $\widetilde{S}_\delta^\sym$ is a selfadjoint matrix-valued semicircular element with the self-dual covariance map $\widetilde{\eta}_\delta^\sym$ satisfying $\ds(\widetilde{\eta}_\delta^\sym) = 2 \ds(\widetilde{\eta}_\delta)$.
As $\delta \searrow 0$, we have that $\ds(\widetilde{\eta}_\delta) \to 0$ and hence $\ds(\widetilde{\eta}_\delta^\sym) \to 0$, which gives that $\|\widetilde{\eta}_\delta^\sym(\1_{2m}) - \1_{2m}\| \to 0$. From \Cref{prop:approximate_doubly_stochastic_semicircular}, it thus follows that $\widetilde{S}_\delta^\sym$ converges in distribution to a standard semicircular element $s$ as $\delta \searrow 0$. Further, we note that 
$\Vert \widetilde{S}_\delta^\sym \Vert \leq 2\Vert\widetilde\eta_\delta^\sym\Vert^{1/2}
$. Since $\Vert\widetilde\eta_\delta^\sym\Vert = \Vert\widetilde\eta_\delta^\sym(\1_{2m})\Vert$, we have also a uniform bound on the norm of the $\widetilde{S}_\delta^\sym$.
From \Cref{prop:FK_determinant_properties} \ref{it:FK_determinant_doubly_stochastic} and \Cref{prop:FK_determinant_lower_semicontinuity}, it follows that
\begin{equation}\label{eq:thm:1.2_upper_bound-2}
e^{-\frac{1}{2}}= \Delta(s)\geq \limsup_{\delta\searrow0} \Delta(\widetilde{S}_\delta^\sym).
\end{equation}

By putting \eqref{eq:thm:1.2_upper_bound-1} and \eqref{eq:thm:1.2_upper_bound-2} together, we get \eqref{eq:thm:1.2_upper_bound}, as we wished to show.

\section{Proof of the estimate for the capacity}\label{sec:proof_of_thm:1.3}

Let us recall our main statement on the capacity of a completely positive map in the case of integer coefficient matrices. 

\begin{theorem}
Let $\eta$ be a completely positive map with integer coefficients,
$$\eta:\ M_m(\C)\to M_m(\C), \quad b\mapsto \eta(b):=\sum_{i=1}^n a_iba^*_i,\qquad
\text{with $a_i\in M_m(\Z)$.}$$ 
\begin{itemize}
\item
If $\eta$ is rank-decreasing then $\capa(\eta)=0$.
\item
If $\eta$ is rank non-decreasing then 
$\capa(\eta)\geq 1$.
\end{itemize}
\end{theorem}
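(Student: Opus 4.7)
The first bullet is immediate from \Cref{prop:capacity_positive}, so I focus on the second. My plan is to detour through free Haar unitaries and invoke L\"uck's determinantal conjecture for sofic groups, proved in this generality by Elek and Szabo \cite{elek2005hyperlinearity}. Letting $u_1,\dots,u_n$ be the canonical generators of the free group $F_n$, regarded as free Haar unitaries in the group von Neumann algebra $L(F_n)$ with its canonical tracial state $\tau$, I would introduce the pencil
$$U := \sum_{i=1}^n a_i \otimes u_i \in M_m(L(F_n)),$$
and proceed in two steps: (A) prove a purely analytic inequality $\Delta(U)^{2m} \leq \capa(\eta)$ valid for any completely positive $\eta$; (B) exploit the integer structure and soficity of $F_n$ to obtain $\Delta(U) \geq 1$ when $\eta$ is additionally rank non-decreasing. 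Combining (A) and (B) will yield $\capa(\eta) \geq \Delta(U)^{2m} \geq 1$.

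For (A), I would first assume that $\eta$ is indecomposable. \Cref{prop:indecomposable_minimizer} then supplies a positive definite $c \in M_m(\C)$ with $\det c = 1$ and $\det \eta(c) = \capa(\eta)$ for which $\widetilde{\eta} := \eta_{\eta(c)^{-1/2},c^{1/2}}$ is doubly stochastic. Setting $\widetilde U := (\eta(c)^{-1/2} \otimes \1)\,U\,(c^{1/2} \otimes \1)$, a direct computation using \Cref{prop:FK_determinant_properties} (i), (iv), (vi) — in parallel to \Cref{lem:opval_semicricular_operations} (i) — gives $\Delta(\widetilde U) = \capa(\eta)^{-1/(2m)} \Delta(U)$, so it suffices to check $\Delta(\widetilde U) \leq 1$ in the doubly stochastic case. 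Here the orthogonality relation $\tau(u_iu_j^*) = \delta_{ij}$ for free Haar unitaries yields $(\tr_m\otimes\tau)(\widetilde U\widetilde U^*) = \tr_m(\widetilde\eta(\1_m)) = 1$, and the AM--GM inequality (\Cref{prop:FK_determinant_properties} (iii)) closes the step: $\Delta(\widetilde U)^2 = \Delta(\widetilde U\widetilde U^*) \leq 1$. The case of a rank non-decreasing but not necessarily indecomposable $\eta$ then follows by exactly the approximate-minimizer argument employed in \Cref{subsec:proof_thm:1.2_upper_bound}: replace the exact minimizer by near-minimizers produced by \Cref{prop:approximate_minimizer} and let $\delta \searrow 0$, using that $\widetilde\eta_\delta(\1_m) \to \1_m$ by control of $\ds(\widetilde\eta_\delta)$.

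For (B), assume $a_i \in M_m(\Z)$ and $\eta$ is rank non-decreasing. By \Cref{prop:capacity_positive} and the free-field interpretation recalled in \Cref{subsec:capacity}, $A = \sum a_i \fx_i$ is invertible over $\C\plangle \fx_1,\dots,\fx_n\prangle$. Via Linnell's embedding of the free field into $L(F_n)$ — equivalently, via the atom-freeness machinery underlying \cite{MSY2023} adapted from free semicirculars to free Haar unitaries — this forces $U$ to have trivial kernel in $M_m(L(F_n))$. Since $F_n$ is residually finite and hence sofic, and since $U \in M_m(\Z[F_n])$ has integer coefficients, the Elek--Szabo theorem delivers $\Delta(U) \geq 1$. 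The intuition behind this bound is exactly the one recalled in the introduction: a sofic approximation of $(u_1,\dots,u_n)$ by permutation matrices $(\sigma_1^{(N)},\dots,\sigma_n^{(N)})$ gives integer matrices $\sum_i a_i \otimes \sigma_i^{(N)} \in M_{mN}(\Z)$ whose nonzero determinants are at least $1$, and the careful rank/$L^2$-control in \cite{elek2005hyperlinearity} transfers the bound to the limit despite possible partial kernels at finite $N$.

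I expect step (B) to be the main obstacle: it rests on the deep determinantal conjecture for sofic groups together with the identification between rank non-decreasingness of $\eta$ and trivial kernel of the Haar unitary pencil $U$ in the group factor $L(F_n)$, a transfer known through Linnell's construction but not elementary. Step (A), although it provides the conceptual bridge from $\capa(\eta)$ to a Fuglede--Kadison determinant, is technically considerably softer than the semicircular analog \Cref{thm:1.2}: the second-moment orthogonality $\tau(u_iu_j^*) = \delta_{ij}$ bypasses the full non-crossing pair partition machinery needed in \Cref{prop:doubly_stochastic_semicircular} and \Cref{prop:approximate_doubly_stochastic_semicircular}, reducing matters to operator scaling plus AM--GM.
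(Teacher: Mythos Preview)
Your proposal is correct and hits the same key ingredients as the paper's proof: free Haar unitaries $u_1,\dots,u_n$ in $L(F_n)$, Linnell's theorem to ensure $U=\sum a_i\otimes u_i$ has trivial kernel, the determinantal conjecture for sofic (here free) groups to get $\Delta(U)\geq 1$, and an operator-scaling plus AM--GM argument to bound the capacity from below. Steps (A) and (B) together are exactly the paper's strategy.

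The one place where you take an unnecessary detour is step (A). You split into the indecomposable case (exact minimizer) and the general case (approximate minimizers via \Cref{prop:approximate_minimizer}, $\delta\searrow 0$), apparently because you believe the AM--GM step requires \emph{doubly} stochastic $\widetilde\eta$. It does not. The only thing used in the computation $(\tr_m\otimes\tau)(\widetilde U\widetilde U^*)=\tr_m(\widetilde\eta(\1_m))$ is the identity $\widetilde\eta(\1_m)=\1_m$, and for the particular scaling $\widetilde\eta=\eta_{\eta(b)^{-1/2},b^{1/2}}$ this holds \emph{exactly for every} positive definite $b$, since $\eta(b)^{-1/2}\eta(b)\eta(b)^{-1/2}=\1_m$. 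So the paper simply fixes an arbitrary $b>0$ with $\det(b)=1$, forms $\widetilde T=(\eta(b)^{-1/2}\otimes\1)\,T\,(b^{1/2}\otimes\1)$, and combines $\Delta(\widetilde T)=\Delta(\eta(b))^{-1/2}\Delta(T)$ with $\Delta(\widetilde T)^2\leq(\tr_m\otimes\tau)(\widetilde T\widetilde T^*)=1$ to obtain $\det(\eta(b))^{1/m}=\Delta(\eta(b))\geq\Delta(T)^2\geq 1$ directly. Taking the infimum over $b$ gives $\capa(\eta)\geq 1$ with no case distinction, no approximate minimizers, and no limiting argument. Your version works, but the paper's is one clean paragraph.
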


The first part is actually an equivalence, according to Lemma 4.5 in \cite{Gur2004}, see our \Cref{prop:capacity_positive} A weaker version of the second part was already proved by Garg, Gurvits, Oliveira, and Widgerson; see Theorem 2.18 of \cite{GGOW19}. They had as lower bound  $1/(m^{2m})$ instead of 1. Our proof of the stronger, size-independent, lower bound will follow mainly their ideas, but instead with approximating matrices we will work directly in the limit of Haar unitaries. 

\begin{proof}
We only have to prove the second part. So
assume that $\eta$ is not rank-decreasing, namely, that 
$A=a_1\otimes \fx_1+\ldots+a_n\otimes \fx_n$ is invertible over the free skew field. 
We will now work with an analytic model of this operator; but instead of plugging in free semicirculars for the $\fx_i$ we will choose free Haar unitaries. For this,
let $u_1,\dots,u_n$ be the generators of the free group $\FF_n$, realized as left-multiplication operators in $L(\FF_n)$. With respect to the canonical trace $\tau$ on $L(\FF_n)$, those operators are free Haar unitaries. Now consider the operator
$$T:=\sum_{i=1}^n a_i\otimes u_i\in M_m(L(\FF_n)),$$
where the trace on $M_m(L(\FF_n))=M_m(\C)\otimes L(\FF_n)$ is given by $\tr_m\otimes\tau$.

Since the determinantal conjecture is true for the free group \cite{luck2002l2}, we know that $\Delta(T)\geq 1$. One should note that L\"uck uses a modified version of the Fuglede-Kadison determinant, which ignores the kernel of the operator. Since, by a theorem of Linnell, free Haar unitaries generate the free skew field (see, Corollary 6.4 in \cite{MSY2023}), we know that $T$ is invertible as an unbounded operator and thus has no kernel. Hence, we have $\Delta(T)\geq 1$ for the ordinary Kadison-Fuglede determinant $\Delta$.

Now let us estimate the capacity of $\eta$. Consider a positive $b\in M_m(\C)$ with $\det(b)=1$; hence, $\Delta(b)=1$. We have to show that also $\det(\eta(b))\geq 1$, which is the same as $\Delta(\eta(b))\geq 1$.

Since $\det(b)=1$ and $\eta$ is rank non-decreasing, $b$ and thus also $\eta(b)$ have full rank and hence $\eta(b)$ is invertible. Thus we can consider
$$c_i:=\eta(b)^{-1/2} a_i b^{1/2}\in M_m(\C)$$
and use those to define the new operator
$$\widetilde T:=\sum_{i=1}^n c_i\otimes u_i=\eta(b)^{-1/2} T b^{1/2} \in M_m(L(\FF_n)).$$
Since both $\Delta(T)$ and $\Delta(b^{1/2})=\Delta(b)^{1/2}$ are bigger than 1, we get, by the multiplicativity of $\Delta$, \Cref{prop:FK_determinant_properties} \ref{it:FK_determinant_multiplicative}, the estimate
$$\Delta(\widetilde T)=\Delta\bigl(\eta(b)^{-1/2}\bigr)\cdot \Delta(T) \cdot \Delta( b^{1/2})\geq 
\Delta\bigl(\eta(b)\bigr)^{-1/2}. 
$$
Note that we have
$$\sum_{i=1}^n c_ic_i^*=\sum_{i=1}^n \eta(b)^{-1/2} a_i b^{1/2} b^{1/2} a_i^* \eta(b)^{-1/2}=
\eta(b)^{-1/2}\eta(b) \eta(b)^{-1/2}=\1_m.$$
This allows us to estimate
$\Delta(\widetilde T)=\Delta(\widetilde T \widetilde T^*)^{1/2}$ in the other direction, by using the geometric mean-algebraic mean inequality, \Cref{prop:FK_determinant_properties} \ref{it:FK_AMGM}, and $\tau(u_iu_j^*)=\delta_{ij}$. Indeed, we get in this way
\begin{multline*}
\Delta(\widetilde T \widetilde T^*)\leq \tr_m\otimes\tau(\widetilde T\widetilde T^*)\\
=\sum_{i,j=1}^n \tr_m(c_ic_j^*)\tau(u_iu^*_j) = \sum_{i=1}^n \tr_m(c_ic_i^*) 1=\tr_m(\1_m)=1.
\end{multline*}
Putting our two inequalities together leads then to
$$\Delta\bigl(\eta(b)\bigr)^{- 1/2}\leq \Delta(\widetilde T)=\Delta(\widetilde T \widetilde T^*)^{1/2}\leq
1,$$
and thus
$\Delta(\eta(b))\geq 1$.
Since this is true for any positive $b$ with $\det(b)=1$ we have shown that
$\capa(\eta)\geq 1$.
\end{proof}

\section{Questions and conjectures}

Let us put our estimate for the Fuglede-Kadision determinant of a matrix-valued semicircular operator in the context of the determinantal property of groups.
By rescaling our semicircular variables to $\widetilde s_i:=s_ie^{1/2}$ we can state our \Cref{thm:1.4} in a form which looks exactly like the determinantal property for sofic groups: For all invertible
$\widetilde S=\sum_{i=1}^n a_i\otimes \widetilde s_i$ with $a_i\in M_m(\Z)$
we have that $\Delta(\widetilde S)\geq 1$.

It would of course be nice if one could prove similar statements for situations where we replace the operators $(\widetilde s_1,\dots,\widetilde s_n)$ by other tuples of operators. 

\begin{question}
For which tuples $(x_1,\dots,x_n)$ of operators do we have the following
determinantal property: For all invertible
$X=\sum_{i=1}^n a_i\otimes x_i$ with $m\in\N$ and $a_i\in M_m(\Z)$
is $\Delta(X)\geq 1$?
\end{question}

At the moment we know this for generators of free groups and for free semicircular operators.
However, we do not see a way to deal with this in general. In particular, our proof of \Cref{thm:1.4} relies on the very special property from \Cref{prop:doubly_stochastic_semicircular} of the semicircular situation.

In the group case, the proof of the determinantal property relies on showing the corresponding property for approximating matrices and then extend this to the limit. In the semicircular case there are of course also canonical approximating matrices for $s_1,\dots,s_n$. Those have, however, not integer values as entries, so the basic argument why the determinant is $\geq 1$ cannot be used here. On the other hand there are many investigations about determinants of random matrices (like \cite{zaporozhets2014random,nguyen2014random}), which are compatible with the limiting statement $\Delta\geq 1$. However, as far as we know all these assume some special structure (in particular, quite some independence between the entries) for the matrices. Our setting is much more general and suggests thus the following conjecture for arbitrary Gaussian (or more general Wigner) random matrices.

\begin{conjecture}
Let $m\in\N$ and $a_i\in M_m(\Z)$ be given and denote by $\eta$ the corresponding completely positive map
$$\eta:\ M_m(\C)\to M_m(\C), \quad b\mapsto \eta(b):=\sum_{i=1}^n a_iba^*_i.$$
Assume that $\eta$ is rank non-decreasing. Then consider $n$ independent
GUE random matrices $X^{(N)}_1,\dots,X^{(N)}_n$ and let $A_N$ denote the Gaussian block random matrix
$$A_N=\sum_{i=1}^n a_i\otimes X^{(N)}_i$$
of size $mN\times mN$.
We claim that $A_N$ is invertible for sufficiently large $N$ and that
$$\lim_{N\to\infty}\Delta(A_N)=\capa(\eta)^{\frac 1{2m}}e^{-\frac{1}{2}},$$
both in expectation and almost surely.
\end{conjecture}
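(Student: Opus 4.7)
The plan is to reduce the conjecture, via \Cref{thm:1.2}, to showing $\Delta(A_N) \to \Delta(S)$ for $S = \sum_{i=1}^n a_i \otimes s_i$. The starting point is Voiculescu's asymptotic freeness theorem, which gives $A_N \to S$ in $\ast$-distribution almost surely, together with the strong convergence theorems of Haagerup--Thorbj\o{}rnsen and Collins--Male, which ensure $\|A_N\| \to \|S\|$ almost surely. Writing $\log \Delta(A_N) = \frac{1}{2} \int_0^C \log t \, d\mu_{|A_N|^2}(t)$ for some uniform $C>\|S\|^2$, the whole question reduces to controlling the logarithmic singularity at the origin.

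The upper bound $\limsup_{N\to\infty} \Delta(A_N) \leq \Delta(S)$ should follow from \Cref{prop:FK_determinant_lower_semicontinuity} applied to $|A_N|$ and $|S|$. The required uniform operator-norm bound is provided by strong convergence, and triviality of the kernel of $S$ under our rank non-decreasing assumption was established in \cite{MSY2023}. For the almost sure invertibility of $A_N$, one notes that $\det(A_N)$ is a polynomial in the independent Gaussian entries of $X^{(N)}_1,\dots,X^{(N)}_n$ which is not identically zero precisely when $A = \sum_i a_i \otimes \fx_i$ has full inner rank in $M_m(\C\plangle \fx_1,\dots,\fx_n \prangle)$; this is equivalent to $\eta$ being rank non-decreasing (by Gurvits's \Cref{prop:capacity_positive} combined with \cite{MSY2023}), so $A_N$ is almost surely invertible.

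The main obstacle is the matching lower bound $\liminf_{N\to\infty} \Delta(A_N) \geq \Delta(S)$, which amounts to preventing mass of $\mu_{|A_N|^2}$ from escaping to zero too rapidly. Splitting at a small $\epsilon>0$, the integral of $\log t$ over $[\epsilon,C]$ converges to its $S$-counterpart by bounded weak convergence, so the conjecture hinges on a uniform hard-edge estimate of the form
$$\limsup_{N\to\infty} \E\Bigl[\int_0^\epsilon |\log t|\, d\mu_{|A_N|^2}(t)\Bigr] \xrightarrow{\,\epsilon \searrow 0\,} 0.$$
This is the genuinely difficult part, as $\mu_{|S|^2}$ itself may have a density diverging at the origin in the limit, so one is forced to distinguish a true accumulation of singular values from an atom. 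A plausible route combines the local law for Kronecker random matrices of Alt--Erd\H{o}s--Kr\"uger, which controls the resolvent of $A_N A_N^\ast$ down to optimal microscopic scales and thereby bounds $\mu_{|A_N|^2}([0,\epsilon])$, with a lower-tail estimate on the least singular value of $A_N$ in the spirit of Rudelson--Vershynin, adapted to the Gaussian block structure so as to absorb the logarithmic singularity. Once the expectation version is in hand, the almost sure convergence should follow by Gaussian (Borell) concentration applied to a regularized version of $\log \Delta(A_N)$, combined with a Borel--Cantelli argument to pass to the unregularized limit.
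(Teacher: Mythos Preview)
The paper does not prove this statement: it is posed explicitly as an open \emph{conjecture} in the final section, with no argument offered beyond the heuristic that existing random-matrix determinant results (cited as \cite{zaporozhets2014random,nguyen2014random}) are compatible with the claimed limit but assume too much independence between entries to apply here. There is therefore no proof in the paper to compare against.

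As a standalone attempt, your outline is a reasonable roadmap and correctly isolates the genuine obstacle. The reduction via \Cref{thm:1.2} to $\Delta(A_N)\to\Delta(S)$ is exactly right, and the upper bound via upper semicontinuity is essentially sound (though note that \Cref{prop:FK_determinant_lower_semicontinuity} as stated requires the operators to live in a common tracial $W^\ast$-probability space; you would need either an ultraproduct embedding or the more direct version for weak convergence of measures). Your argument for almost sure invertibility of $A_N$ is also morally correct but needs the additional observation that full inner rank over the free field implies full rank upon substituting generic $N\times N$ matrices once $N$ is large enough---this is a known fact about noncommutative rank, but it is not quite what you wrote.

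The lower bound, however, remains a genuine gap rather than a proof. You correctly name the ingredients one would want---a local law down to the hard edge and a least-singular-value tail bound---but neither the Alt--Erd\H{o}s--Kr\"uger results nor the Rudelson--Vershynin techniques apply off the shelf to an arbitrary Kronecker-structured Gaussian matrix $\sum_i a_i\otimes X_i^{(N)}$ with no assumption beyond $\eta$ being rank non-decreasing, precisely because the limiting density of $|S|^2$ may blow up at zero in a way governed by the Novikov--Shubin invariant, which the paper itself notes (just before \Cref{thm:1.3}) is not under sufficient control. So your proposal identifies the right bottleneck but does not resolve it; what you have is a plausible strategy for an open problem, not a proof.
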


\bibliographystyle{amsalpha}
\bibliography{main}

\providecommand{\bysame}{\leavevmode\hbox to3em{\hrulefill}\thinspace}
\providecommand{\MR}{\relax\ifhmode\unskip\space\fi MR }
\providecommand{\MRhref}[2]{%
  \href{http://www.ams.org/mathscinet-getitem?mr=#1}{#2}
}
\providecommand{\href}[2]{#2}
\begin{thebibliography}{GGOW20}

\bibitem[AEK19]{AjEK2019}
O.~H. {Ajanki}, L.~{Erd{\H{o}}s}, and T.~{Kr{\"u}ger}, \emph{Stability of the
  matrix {Dyson} equation and random matrices with correlations}, Probab.
  Theory Relat. Fields \textbf{173} (2019), no.~1-2, 293--373.

\bibitem[BM20]{BM2020}
M.~{Banna} and T.~{Mai}, \emph{H{\"o}lder continuity of cumulative distribution
  functions for noncommutative polynomials under finite free {Fisher}
  information}, J. Funct. Anal. \textbf{279} (2020), no.~8, 44, Id/No 108710.

\bibitem[BM23]{BM2023}
\bysame, \emph{Berry-{Esseen} bounds for the multivariate {{\(\mathcal{B}
  \)}}-free {CLT} and operator-valued matrices}, Trans. Am. Math. Soc.
  \textbf{376} (2023), no.~6, 3761--3818.

\bibitem[CM23]{chatterjee2023noncommutative}
A.~{Chatterjee} and P.~{Mukhopadhyay}, \emph{{The Noncommutative Edmonds'
  Problem Re-visited}}, arXiv preprint arXiv:2305.09984 (2023).

\bibitem[{Den}09]{deninger2009mahler}
C.~{Deninger}, \emph{{Mahler measures and Fuglede--Kadison determinants}},
  arXiv preprint arXiv:0905.0604 (2009).

\bibitem[ES05]{elek2005hyperlinearity}
G.~{Elek} and E.~{Szab{\'o}}, \emph{Hyperlinearity, essentially free actions
  and l 2-invariants. the sofic property}, Mathematische Annalen \textbf{332}
  (2005), 421--441.

\bibitem[FK52]{fuglede1952determinant}
B.~{Fuglede} and R.~V. {Kadison}, \emph{Determinant theory in finite factors},
  Annals of Mathematics (1952), 520--530.

\bibitem[GGOW16]{GGOW16}
A.~{Garg}, L.~{Gurvits}, R.~{Oliveira}, and A.~{Wigderson}, \emph{A
  deterministic polynomial time algorithm for non-commutative rational identity
  testing}, 2016 IEEE 57th Annual Symposium on Foundations of Computer Science
  (FOCS), IEEE, 2016, pp.~109--117.

\bibitem[GGOW20]{GGOW19}
\bysame, \emph{Operator scaling: theory and applications}, Foundations of
  Computational Mathematics \textbf{20} (2020), no.~2, 223--290.

\bibitem[{Gur}04]{Gur2004}
L.~{Gurvits}, \emph{Classical complexity and quantum entanglement}, J. Comput.
  Syst. Sci. \textbf{69} (2004), no.~3, 448--484.

\bibitem[{Hal}74]{Halmos1974}
P.~R. {Halmos}, \emph{Finite-dimensional vector spaces. {Reprint} of the 2nd
  ed}, Undergraduate Texts Math., Springer, Cham, 1974.

\bibitem[{Hay}16]{hayes2016fuglede}
B.~{Hayes}, \emph{Fuglede--kadison determinants and sofic entropy}, Geometric
  and Functional Analysis \textbf{26} (2016), 520--606.

\bibitem[HH21]{hamada2020computing}
M.~{Hamada} and H.~{Hirai}, \emph{Computing the nc-rank via discrete convex
  optimization on {CAT}(0) spaces}, SIAM J. Appl. Algebra Geom. \textbf{5}
  (2021), no.~3, 455--478.

\bibitem[HL00]{haagerup2000brown}
U.~{Haagerup} and F.~{Larsen}, \emph{{Brown's spectral distribution measure for
  R-diagonal elements in finite von Neumann algebras}}, Journal of Functional
  Analysis \textbf{176} (2000), no.~2, 331--367.

\bibitem[HMS23]{hoffmann2023computing}
J.~{Hoffmann}, T.~{Mai}, and R.~{Speicher}, \emph{Computing the noncommutative
  inner rank by means of operator-valued free probability theory}, arXiv
  preprint arXiv:2308.03667 (2023).

\bibitem[HS07]{haagerup2007brown}
U.~{Haagerup} and H.~{Schultz}, \emph{Brown measures of unbounded operators
  affiliated with a finite von neumann algebra}, Mathematica Scandinavica
  (2007), 209--263.

\bibitem[IQS18]{ivanyos2018constructive}
G.~{Ivanyos}, Y.~{Qiao}, and K.~V. {Subrahmanyam}, \emph{Constructive
  non-commutative rank computation is in deterministic polynomial time},
  Comput. Complexity \textbf{27} (2018), no.~4, 561--593.

\bibitem[{Jus}]{sofic}
K.~{Juschenko}, \emph{{Sofic groups}},
  \url{https://web.ma.utexas.edu/users/juschenko/files/soficgroups.pdf}.

\bibitem[L{\"u}c02]{luck2002l2}
W.~L{\"u}ck, \emph{{{\(L^2\)}}-invariants: Theory and applications to geometry
  and {{\(K\)}}-theory}, Ergeb. Math. Grenzgeb., 3. Folge, vol.~44, Berlin:
  Springer, 2002.

\bibitem[{Mai}22]{Mai2022}
T.~{Mai}, \emph{{The Dyson equation for $2$-positive maps and H{\"o}lder bounds
  for the Lévy distance of densities of states}}, {arXiv preprint
  arXiv:2210.04743 [math.FA]} (2022), 27.

\bibitem[{Mir}55]{Mirsky1955}
L.~{Mirsky}, \emph{An introduction to linear algebra}, Oxford: {At} the
  {Clarendon} {Press} {XI}, 433 p. (1955)., 1955.

\bibitem[MS17]{MS17}
J.~A. {Mingo} and R.~{Speicher}, \emph{{Free probability and random
  matrices.}}, vol.~35, Toronto: The Fields Institute for Research in the
  Mathematical Sciences; New York, NY: Springer, 2017.

\bibitem[MSY18]{MSY2018}
T.~{Mai}, R.~{Speicher}, and S.~{Yin}, \emph{{The free field: zero divisors,
  Atiyah property and realizations via unbounded operators}}, {arXiv preprint
  arXiv:1805.04150} (2018).

\bibitem[MSY23]{MSY2023}
\bysame, \emph{The free field: realization via unbounded operators and {Atiyah}
  property}, J. Funct. Anal. \textbf{285} (2023), no.~5, 50, Id/No 110016.

\bibitem[{Nay}18]{nayak2018hadamard}
S.~{Nayak}, \emph{The {Hadamard} determinant inequality -- extensions to
  operators on a {Hilbert} space}, J. Funct. Anal. \textbf{274} (2018), no.~10,
  2978--3002.

\bibitem[NSS02]{nica2002operator}
A.~{Nica}, D.~{Shlyakhtenko}, and R.~{Speicher}, \emph{Operator-valued
  distributions. {I}: {Characterizations} of freeness}, Int. Math. Res. Not.
  \textbf{2002} (2002), no.~29, 1509--1538.

\bibitem[NV14]{nguyen2014random}
H.~H. {Nguyen} and V.~{Vu}, \emph{Random matrices: law of the determinant},
  Ann. Probab. \textbf{42} (2014), no.~1, 146--167.

\bibitem[{Sch}01]{schick2001}
T.~{Schick}, \emph{L$^2$-determinant class and approximation of l$^2$-betti
  numbers}, Transactions of the American Mathematical Society \textbf{353}
  (2001), no.~8, 3247--3265.

\bibitem[SS15]{SS2015}
D.~{Shlyakhtenko} and P.~{Skoufranis}, \emph{{Freely independent random
  variables with non-atomic distributions}}, {Trans. Am. Math. Soc.}
  \textbf{367} (2015), no.~9, 6267--6291.

\bibitem[ZK14]{zaporozhets2014random}
D.~{Zaporozhets} and Z.~{Kabluchko}, \emph{Random determinants, mixed volumes
  of ellipsoids, and zeros of gaussian random fields}, Journal of Mathematical
  Sciences \textbf{199} (2014), 168--173.

\end{thebibliography}

\end{document}